\definecolor{darkred}{rgb}{1,0,0} 
\definecolor{darkgreen}{rgb}{0,0.8,0}
\definecolor{darkblue}{rgb}{0,0,1}
\def\reflb#1#2{\begingroup
    #2%
    \def\@currentlabel{#2}%
    \phantomsection\label{#1}\endgroup
}
\numberwithin{equation}{section}
\newtheorem {Theorem}{Theorem}
\numberwithin{Theorem}{section}
\newtheorem {Proposition}[Theorem]{Proposition}
\newtheorem {Corollary}[Theorem]{Corollary}
\theoremstyle{definition}
\newtheorem{Definition}[Theorem]{Definition}
\theoremstyle{remark}
\newtheorem{Remark}[Theorem]{Remark}
\newtheorem{Example}[Theorem]{Example}
\def    \l    {\lambda}
\newcommand{\CE}{{\mathcal E}}
\newcommand{\CA}{{\mathcal A}}
\newcommand{\CM}{{\mathcal M}}
\newcommand{\Ham}{{\mathit{Ham}}}
\newcommand{\tHam}{{\widetilde{\mathit{Ham}}}}
\newcommand{\pt}{{\mathit pt}}
\newcommand{\tP}{\tilde{P}}
\newcommand{\tzeta}{\tilde{\zeta}}
\newcommand{\tpsi}{\tilde{\psi}}
\def    \nat    {{\natural}}
\def    \F      {{\mathbb F}}
\def    \R      {{\mathbb R}}
\def    \Z      {{\mathbb Z}}
\def    \N      {{\mathbb N}}
\def    \Q      {{\mathbb Q}}
\def    \T      {{\mathbb T}}
\def    \CP     {{\mathbb C}{\mathbb P}}
\def    \12    {{\frac{1}{2}}}
\def    \p      {\partial}
\def    \codim  {\operatorname{codim}}
\def    \Sp     {\operatorname{Sp}}
\def    \tSp     {\operatorname{\widetilde{Sp}}}
\def    \U     {\operatorname{U}}
\def    \HF     {\operatorname{HF}}
\def    \HQ     {\operatorname{HQ}}
\def    \GW     {\operatorname{GW}}
\def    \H     {\operatorname{H}}
\def    \CF     {\operatorname{CF}}
\def    \bx     {\bar{x}}
\def    \by     {\bar{y}}
\def    \vtheta     {\vec{\theta}}
\def    \vl    {\vec{\lambda}}
\def    \sgn  {\mathit{sgn}}
\def    \loopp  {\mathit{loop}}
\def    \MURS  {\operatorname{\mu_{\scriptscriptstyle{RS}}}}
\def  \hmu      {\operatorname{\hat{\mu}}}
\def \qq   {\mathrm{q}}
\def    \gr {\operatorname{gr}}
\def    \coker     {\operatorname{coker}}
\begin{document}


\setlength{\smallskipamount}{6pt}
\setlength{\medskipamount}{10pt}
\setlength{\bigskipamount}{16pt}





\title[Pseudo-Rotations and Holomorphic Curves]{Pseudo-Rotations and
  Holomorphic Curves}

\author[Erman \c C\. inel\. i]{Erman \c C\. inel\. i}
\author[Viktor Ginzburg]{Viktor L. Ginzburg}
\author[Ba\c sak G\"urel]{Ba\c sak Z. G\"urel}

\address{EC and VG: Department of Mathematics, UC Santa Cruz, Santa
  Cruz, CA 95064, USA} \email{scineli@ucsc.edu}
\email{ginzburg@ucsc.edu}

\address{BG: Department of Mathematics, University of Central Florida,
  Orlando, FL 32816, USA} \email{basak.gurel@ucf.edu}

\subjclass[2010]{53D40, 37J10, 37J45} 

\keywords{Pseudo-rotations, periodic orbits, Hamiltonian
  diffeomorphisms, Floer homology, quantum product, Gromov--Witten
  invariants}

\date{\today} 

\thanks{The work is partially supported by NSF CAREER award
  DMS-1454342, NSF grant DMS-1440140 through MSRI (BG) and by Simons
  Collaboration Grant 581382 (VG)}


\begin{abstract}
  We prove a variant of the Chance--McDuff conjecture for
  pseudo-rotations: under certain additional conditions, a closed
  symplectic manifold which admits a Hamiltonian pseudo-rotation must
  have deformed quantum product and, in particular, some non-zero
  Gromov--Witten invariants. The only assumptions on the manifold are
  that it is weakly monotone and that its minimal Chern number is
  greater than one. The conditions on the pseudo-rotation are
  expressed in terms of the linearized flow at one of the fixed points
  and hypothetically satisfied for most (but not all)
  pseudo-rotations.
\end{abstract}

\maketitle

\tableofcontents

\section{Introduction}
\label{sec:intro}

We show that a closed symplectic manifold, which has minimal Chern
number greater than one and admits a Hamiltonian pseudo-rotation
satisfying certain mild additional conditions, must have non-vanishing
Gromov--Witten invariants and, moreover, its quantum product is
deformed, i.e., different from the intersection product.

To put this result in perspective, recall that by the Conley
conjecture, for many symplectic manifolds every Hamiltonian
diffeomorphism has infinitely many periodic points. Obviously, the
conjecture requires some additional assumptions on the manifold: an
irrational rotation of $S^2$ about the $z$-axis has only two periodic
points: these are the fixed points -- the Poles. In a similar vein,
the conjecture fails for some other manifolds such as complex
projective spaces, Grassmannians and flag manifolds, symplectic toric
manifolds, and most of the coadjoint orbits of compact Lie groups. In
fact, the conjecture fails for all manifolds admitting a Hamiltonian
circle (or torus) action with isolated fixed points -- a generic
element of the circle or the torus gives rise to a Hamiltonian
diffeomorphism with finitely many periodic points.

On the conjecture side, these counterexamples are comparatively rare
and in a series of works easily spending three decades and contributed
by many, the Conley conjecture has been proved in many cases. The
state of the art result is that it holds for $M$ unless there exists
$A\in\pi_2(M)$ such that $\left<\omega,A\right>>0$ and
$\left<c_1(TM),A\right>>0$; see \cite{Ci,GG:Rev} and also
\cite{GG:survey} for further references and a thorough discussion. In
particular, the conjecture holds whenever $M$ is symplectically
aspherical or negative monotone or $\omega\mid_{\pi_2(M)}=0$.

Yet, these purely topological conditions leave aside a more subtle
question of symplectic topological criteria for the Conley conjecture
to hold. In that realm, the outstanding problem, referred to as the
Chance--McDuff conjecture, is that whenever the Conley conjecture
fails some Gromov--Witten invariants of $M$ are non-zero. It is
well-known that there is a strong connection between the symplectic
topology of $M$ (e.g., Gromov--Witten invariants or the quantum
product) and the dynamics (periodic orbits) of Hamiltonian
diffeomorphisms $\varphi$ of $M$. However, this connection is explored
and usually utilized only in one direction: from symplectic topology
to dynamics. The difficulty in proving the Chance--McDuff conjecture
lies in that it requires going in the opposite direction and this is a
much less understood problem. Till now the only work along these lines
was \cite{McD} where it is shown that a symplectic manifold admitting
a Hamiltonian circle action is uniruled, i.e., has a non-zero
Gromov--Witten invariant with one of the homology classes being the
point class.

In this context, pseudo-rotations are, roughly speaking, Hamiltonian
diffeomorphisms with a finite and minimal possible number of periodic
points.  (Actual definitions vary, but all of them reflect the same
idea; see \cite{GG:PR}.) In particular, pseudo-rotations are
counterexamples to the Conley conjecture and, in fact, they are the
only counterexamples known to date. (See \cite{Sh:HZ} for some
relevant recent results.)  Every known Hamiltonian diffeomorphism
$\varphi$ with finitely many periodic points is a pseudo-rotation in a
very strong sense: all periodic points of $\varphi$ are its fixed
points, they are elliptic, and all iterates $\varphi^k$ are
non-degenerate. This is the definition we adopt here.

Pseudo-rotations occupy a distinguished place in dynamical systems
theory far and mainly beyond the Hamiltonian setting. They can have
extremely interesting dynamics. For instance, there are examples of
ergodic Hamiltonian pseudo-rotations and even of pseudo-rotations with
finite number of ergodic measures. Such pseudo-rotations are obtained
by the so-called conjugation method which requires the manifold to
have a circle or torus action; see \cite{AK,FK,LeRS}. In fact, in all
known examples, a manifold which admits a pseudo-rotation also admits
a circle or torus action. This, combined with the results from
\cite{McD}, was the main motivation for the Chance--McDuff
conjecture. Recently it has been understood that symplectic
topological methods are well suited for studying Hamiltonian
pseudo-rotations; \cite{Br:Annals,Br,BH,GG:PR, GG:PRvR}.

Here we prove a variant of the Chance--McDuff conjecture for
pseudo-rotations. Namely, we show that, under certain additional
conditions, a manifold $M$ that admits a pseudo-rotation $\varphi$
must have deformed quantum product and, in particular, some
non-vanishing Gromov--Witten invariants. The
only assumptions on $M$ are that it is weakly monotone and that $N>1$,
where $N$ is the minimal Chern number. The conditions on $\varphi$ are
more involved and phrased in terms of the linearized flow at one of
its one-periodic orbits. One may expect these conditions to be met for
the majority (although certainly not all) pseudo-rotations. When this
paper was near completion we learned about a work by Egor Shelukhin,
\cite{Sh}, where he also established a variant of the Chance--McDuff
conjecture.

Our method uses only a minimal input from symplectic
topology. However, on the unexpected side, it relates combinatorics of
integer partitions to the pair-of-pants product and the regularity of
zero-energy pair-of-pants curves in Floer theory; see Sections
\ref{sec:trans} and \ref{sec:mainthm-pf}.  We use these curves to
capture non-vanishing Gromov--Witten invariants. Identifying the
quantum and Floer homology, we show by purely combinatorial means that
in many instances there are abundant zero-energy pair-of-pants curves
corresponding to long products in quantum homology. These long
products would vanish if the quantum product were undeformed. The
underlying idea can be best illustrated by the example of an
irrational rotation of~$S^2$.

\begin{Example}[Irrational Rotations of $S^2$]
  \label{ex:irr-rot}
  Let $\varphi$ be an irrational rotation of $S^2$ in an angle
  $\theta$, where $\pi<\theta<2\pi$. The fixed points of $\varphi$ are
  the North Pole $y$ and the South Pole $x$. The iterates $y^k$ and
  $x^k$ are also the only periodic points of $\varphi$. We equip these
  points with trivial cappings. Then $\mu(y)=1$ and $\mu(x)=-1$. Thus,
  when we identify the Floer complex $\CF_*(\varphi)$ with the quantum
  homology $\HQ_*\big(S^2\big)[-1]$, the North Pole $y$ represents the
  fundamental class $[S^2]$ and the South Pole $x$ represents
  $[\pt]$. On the other hand, $\mu\big(x^2\big)=-3$. Thus $[x^2]$
  represents the class $\qq [S^2]$, where $\qq$ is the generator of
  the Novikov ring. (With our conventions $|\qq|=-4$.) There exists
  exactly one pair-of-paints curve from $(x,x)$ to $x^2$ -- the
  constant curve. Assuming that this pair-of-pants curve is regular,
  which indeed is the case (see Corollary \ref{cor:trans} and
  \cite{Se:prod}), we have
  $$
  x*x=x^2+\ldots,
  $$
  where $*$ is the quantum product and the dots stand for capped
  periodic orbits with action strictly smaller than the action of
  $x^2$. (In fact, it is easy to see that no such orbits enter this
  identity.) In any event, no cancellations can happen on the
  right-hand side and we conclude that
  $$
  [\pt]*[\pt]=\qq[S^2]+\ldots\neq 0.
  $$
  On the other hand, if the quantum product were not deformed (i.e.,
  agreed with the intersection product) we would obviously have
  $[\pt]*[\pt]=0$. (Moreover, we see that
  $\GW_A\big([\pt],[\pt],[\pt]\big)\neq 0$, where $A$ is the
  ``positive'' generator of $\H_2(S^2;\Z)$.)
\end{Example}

This method, which shares some common elements with \cite{Se:prod},
readily lends itself to several generalizations to be explored
elsewhere. First of all, by using other, more sophisticated algebraic
structures one can certainly alter the requirements on the
pseudo-rotation or, perhaps, even eliminate these requirements
entirely. Secondly, under favorable circumstances, the method allows
one to obtain more specific information about the quantum homology
algebra of $M$ although the combinatorics of the problem quickly gets
rather involved.

The paper is organized in a somewhat counter-logical fashion. In
Section \ref{sec:results} we give necessary definitions and state main
results. Preliminary material from symplectic topology is discussed in
Section \ref{sec:prelim}. In Section \ref{sec:adm_part-qp} we
introduce extremal partitions -- the key combinatorial ingredient of
the proofs -- and reduce the main results of the paper to
combinatorial problems. Extremal partitions are studied in detail in
Section \ref{sec:adm_part-study}, where we prove the combinatorial
counterparts of the main theorems and thus complete their proofs.

\section{Main results}
\label{sec:results}
To detect the quantum product, our method requires imposing some
additional conditions on a pseudo-rotation $\varphi$. These
requirements are often, but not always, satisfied and are expressed in
terms of the linearized flow $\Phi=D\varphi^t|_{\bx}$ along a capped
one-periodic orbit $\bx$ of $\varphi^t$. In this section, we first
formulate these conditions and then state the main results of the
paper, deliberately opting to work with requirements which are easier
to state rather than more general.

\subsection{Definitions}
\label{sec:def}
We start by introducing several symplectic linear algebra invariants
associated with the linearized time one-map (or the flow) at a
one-periodic orbit $x$ of a Hamiltonian diffeomorphism $\varphi$. In
the discussion below, the reader should think that
$P=D\varphi\colon T_xM\to T_xM$ at a fixed point $x$ of $\varphi$ and
$\Phi$ is the linearized flow $D\varphi^t|_{\bx}$ along a capped
one-periodic orbit $\bx$ and that $N$ is the minimal Chern number of
$M$.

A word is also due on the nomenclature used in this section: on
Conditions \ref{cond:A}, \ref{cond:B1} and \ref{cond:B2}. The reason
for this labeling is that the role of Condition \ref{cond:A} is
distinctly different from that of Conditions \ref{cond:B1} or
\ref{cond:B2}. Condition \ref{cond:A} is used to detect long
non-vanishing products in the quantum homology, while Conditions
\ref{cond:B1} and \ref{cond:B2} ensure that these products are not
essentially the products of the fundamental class with itself.

\subsubsection{Base group}
\label{sec:base_grp}
Consider an elliptic and non-degenerate symplectic transformation
$P\in \Sp(2n)$ and let $\tP\in\Sp(2n)$ be isospectral to $P$ and
semi-simple. In other words, we require that all eigenvalues of $\tP$
are unit, $1$ is not an eigenvalue, and there exists a family of
non-degenerate tranformations $P_t\in\Sp(2n)$ connecting $P_0=P$ and
$P_1=\tP$ such that all $P_t$ have the same spectrum, and $\tP$ is
diagonalizable, i.e., $\R^{2n}$ splits into a sum of $n$ invariant
symplectic subspaces. (Then each of these subspaces is a plane and on
it $\tP$ is conjugate to a rotation.) It is easy to see that such a
transformation $\tP$ exists and, in fact, can be taken arbitrarily
close to $P$; see, e.g., \cite[Lemma 5.1]{Gi:CC}. Of course, $\tP$ is
not unique.

Since $\tP$ is elliptic and semi-simple, it is symplectically
conjugate to a unitary transformation and, as a consequence, the
closure of the sequence $\{\tP^k\mid k\in\N\}$ is a compact abelian
subgroup of $\Sp(2n)$, which we denote by $\Gamma$ (or $\Gamma(P)$ or
$\Gamma(x)$ when $P=D\varphi|_x$), and call the \emph{base group}. By
construction, $\Gamma$ is monothetic and thus $\Gamma/\Gamma_0$ is
finite cyclic, where $\Gamma_0$ is the connected component of the
identity in $\Gamma$. Clearly, up to symplectic conjugation, $\Gamma$
is independent of the choice of $\tP$.

Alternatively, $\Gamma$ can be described as follows. Since, $P$ is
elliptic all eigenvalues of $P$ lie on the unit circle. Let
$$
\vtheta:=(\theta_1,\ldots,\theta_n) \in \T^n=S^1_1 \times \ldots
\times S^1_n
$$
be the collection of Krein--positive eigenvalues of $P$, ordered in an
arbitrary way; see, e.g., \cite[Sect.\ 1.3]{Ab} or \cite{Lo,SZ}. Then
$\Gamma$ is naturally isomorphic to the subgroup of the torus $\T^n$
generated by $\vtheta$, i.e., to the closure of the sequence
$\{k\vtheta\mid k\in \N\}$ in $\T^n$. For a suitable choice of a
complex structure on $\R^{2n}$ we can think of $\T^n$ as the maximal
torus in $\U(n)$ containing $\tP=\vtheta$. Note that
$\dim \Gamma\geq 1$ when $P$ is strongly non-degenerate, i.e., all
iterates $P^k$, $k\in\N$, are non-degenerate. (The converse is not
true.) The key point here is that the ``index theory'' for $\tP$ is
the same as for $P$, but the group generated by $P$ in $\Sp(2n)$ is
not compact, unless $P$ is semi-simple, and is much harder work with.

\begin{Example}
  \label{ex:base_grp1}
  Assume that $\varphi$ is a true rotation (i.e., $\varphi$ generates
  a compact subgroup $G$ of $\Ham(M)$, see Example \ref{ex:true-rot})
  or that it is obtained from such a rotation by the conjugation
  method. Let $P=D\varphi|_{x}$, where $x$ is a fixed point of
  $\varphi$. Then $P$ is automatically semi-simple and, for a true
  rotation, $\Gamma$ is the image of $G$ in $\Sp(T_xM)$ under the
  natural representation of $G$ on $T_xM$.
\end{Example}

\begin{Definition}[Condition A]
  \label{def:cond_A}
  For a fixed $r\in \N$, the transformation $P$ (or the subgroup
  $\Gamma=\Gamma(P)$ or the orbit $x$) satisfies \emph{Condition
    \reflb{cond:A}{A}} if there exist $r$ points
  $\vtheta_1,\ldots,\vtheta_r$ in $\Gamma$ such that
  \begin{equation}
    \label{eq:cond_A}
    \sum_{i=1}^r \lambda_{ij}<1 \textrm{ for all } j=1,\ldots, n,
\end{equation}
where we set
  $$
  \vtheta_i= \big(e^{2\pi\sqrt{-1} \lambda_{i1}}, \ldots,
  e^{2\pi\sqrt{-1}\lambda_{in}}\big) \textrm{ with } 0<\lambda_{ij}<1.
  $$
\end{Definition}
To see what this means geometrically, let us identify $\T^n$ with the
product $I_1\times \ldots \times I_n$ of $n$ intervals each of which
is $[0,\,1]$. Then \eqref{eq:cond_A} determines the standard open
simplex $\Delta$ in the cube $I_j^r$ and Condition \ref{cond:A} is
equivalent to that $\Gamma^r$ intersects the region in $(\T^n)^r$
obtained from the product of $r$ copies of $\Delta$ by rearranging the
coordinates. It is clear that Condition \ref{cond:A} is independent of
the choice of $\tP$ or the ordering of the eigenvalues of $P$.

\begin{Example}[Toric $\Phi$]
  \label{ex:base_grp2}
  Assume that $\Phi$ is \emph{toric}, i.e., by definition
  $\dim \Gamma=n$ or equivalently $\Gamma=\T^n$; see Section
  \ref{sec:toric}. Then Condition \ref{cond:A} is automatically
  satisfied and $P$ is elliptic, semi-simple and strongly
  non-degenerate. More generally, Condition \ref{cond:A} is met when
  that $\Gamma$ contains a one-parameter subgroup of the form
  $t \mapsto (a_1t, \ldots, a_nt)$, $t\in\R$, with $a_i>0$ for
  all~$i$; see Example \ref{ex:ccwise}.
\end{Example}

We will see later that Condition \ref{cond:A} is in some sense
satisfied for ``most'' of the elliptic transformations $P$.

Next, denote by $\mu_\Gamma\in \H^1(\Gamma;\Z)$ the restriction of the
Maslov class to $\Gamma$. In other words, consider the codimension-one
cocycle in $\Gamma$ which is the sum of $n$ cocycles obtained by
setting the $i$th coordinate $\theta_i\in S^1_i$ in $\T^n$ equal to 1
and co-oriented by the counterclockwise orientation of $S^1_i$. (We
are assuming here that $\Gamma$ is not contained in any of the subtori
$\theta_i=1$.) Then $\mu_\Gamma$ is the cohomology class of the this
cocycle. Note that the mean index $\hmu(\gamma)$ of a loop $\gamma$ in
$\Gamma$ is $2\mu_\Gamma(\gamma)$.

\begin{Definition}[Condition B1]
  \label{def:cond_B1}
  For a fixed $N\in \N$, the transformation $P$ (or the subgroup
  $\Gamma=\Gamma(P)$ or the orbit $x$) satisfies \emph{Condition
    \reflb{cond:B1}{B1}} if $\mu_\Gamma$ is not divisible by $N$,
  i.e., there exists a loop $\gamma$ in $\Gamma$ such that
  $N\not|\,\mu_\Gamma(\gamma)$.
\end{Definition}

\begin{Example}
  \label{ex:base_grp3}
  Assume that $P$ is toric, i.e., $\Gamma=\T^n$. Then Condition
  \ref{cond:B1} is automatically satisfied when $N>1$. On the other
  hand, let $\Gamma$ be the circle $t\mapsto (a_1t,\ldots,a_nt)$,
  $t\in S^1$, where $a_i\in \Z$ are non-zero and relatively
  prime. Then $\mu_\Gamma=a_1+\ldots+a_n$ in
  $\H^1(\Gamma;\Z)\cong\Z$. Thus Condition \ref{cond:B1} is satisfied
  if and only if $N\not|\, (a_1+\ldots+a_n)$. Finally, note that this
  condition is never met when $N=1$.
\end{Example}

We say that a path $\Phi \colon [0,\,1] \to \Sp(2n)$ satisfies
Conditions \ref{cond:A} and \ref{cond:B1} if the end-point $\Phi(1)$
satisfies these conditions.  We will elaborate on Conditions
\ref{cond:A} and \ref{cond:B1} in Sections \ref{sec:cases} and
\ref{sec:adm_part-prop}.

\subsubsection{Loop contribution}
\label{sec:loop}
Conditions \ref{cond:A} and \ref{cond:B1} are expressed entirely in
terms of the linear map $P$ or the group $\Gamma$. However, in our
case more information is available -- this is the linearized flow
along $x$ -- and in this section we utilize it.

Consider a strongly non-degenerate path
$\Phi\colon [0,\,1]\to\Sp(2n)$, which we view as an element of
$\tSp(2n)$, with end-point $P=\Phi(1)$. When $P$ is semi-simple we can
decompose $\Phi$ as the concatenation (or product) of a loop $\phi$
and a direct sum of $n$ ``short rotations''
$t\mapsto \exp\big(\pi\sqrt{-1}\lambda t\big)$, where $t\in [0,\,1)$
and $|\lambda|<1$; cf.\ \cite[Sect.\ 4]{GG:PRvR}. When $P$ is not
semi-simple we need to add an isospectral path $P_t$ to this
decomposition as in the previous section. In either case, as is easy
to see, the free homotopy class of the loop $\phi$ is uniquely
determined by $\Phi$. Equivalently, the mean index $\hmu(\phi)$ is
well defined. Set $\loopp(\Phi):=\hmu(\phi)$ and call $\loopp(\Phi)$
the \emph{loop part} of $\Phi$. (Note that $\loopp(\Phi)$ is
necessarily even and equal twice the Maslov class of $\phi$.)

\begin{Definition}[Condition B2]
  \label{def:cond_B2}
  For a fixed $N\in \N$, the path $\Phi$ (or a capped orbit $\bx$)
  satisfies \emph{Condition \reflb{cond:B2}{B2}} if $\Gamma$ is
  connected and there exists a convex neighborhood $V$ of $0\in \T^n$
  whose intersection with $\Gamma$ is connected and an iterate
  $\Phi^k(1)\in V$ such that $2N\not|\,\loopp\big(\Phi^k\big)$.
\end{Definition}

Roughly speaking, one should expect $N-1$ out of $N$ randomly taken
paths $\Phi$ to satisfy this condition. On the other hand, Condition
\ref{cond:B2} (just as Condition \ref{cond:B1}) is never satisfied
when $N=1$.

\subsection{Detecting the quantum product}
\label{sec:main-thm}
Let $(M^{2n},\omega)$ be a closed weakly monotone symplectic manifold
with minimal Chern number $N$. Fix a ground ring $\F$, suppressed in
the notation; e.g., $\F=\Z$ or $\Z_2$ or $\Q$. For our purposes it is
convenient to adopt the following definition; cf.\ \cite{GG:PR}.

\begin{Definition}[Pseudo-rotations]
  \label{def:PR}
  A Hamiltonian diffeomorphism $\varphi\colon M\to M$ is called a
  \emph{pseudo-rotation} (over $\F$) if $\varphi$ is strongly
  non-degenerate, and the differential in the Floer complex of
  $\varphi^k$ over $\F$ vanishes for all $k\in\N$.
\end{Definition}

The differential in the Floer complex depends on the almost complex
structure, but it is easy to see that its vanishing is a well-defined
condition. Note also that for a pseudo-rotation all periodic orbits
are automatically one-periodic and that an iterate of a
pseudo-rotation is again a pseudo-rotation. Definition \ref{def:PR} is
slightly different from the one in \cite{GG:PR} although it captures
the same phenomenon. We refer the reader to that paper for a detailed
discussion of various definitions of a pseudo-rotation. Finally, note
that in some of our results the non-degeneracy requirement can be
somewhat relaxed, but not entirely omitted.

\begin{Example}
  Assume that $\varphi$ is strongly non-degenerate and all its
  periodic orbits are elliptic. Then $\varphi$ is a
  pseudo-rotation. All known to date pseudo-rotations are of this
  type.
\end{Example}

\begin{Example}[True rotations]
  \label{ex:true-rot}
  Assume that $\varphi$ is a \emph{true rotation}, i.e., by definition
  $\varphi$ generates a compact (but not finite) subgroup $G$ of
  $\Ham(M)$. Then $G$ is necessarily a compact Lie group by \cite{RS},
  and hence its connected component $G_0$ of the identity is a
  torus. It is then a standard fact that $\varphi$ is strongly
  non-degenerate if and only if its periodic points are isolated and
  if and only if it has finitely many periodic orbits; see
  \cite{GGK}. Furthermore, the resulting $G_0$-action on $M$ is
  Hamiltonian. (This is ultimately a consequence of some deep results,
  starting with \cite{Ba} characterizing Hamiltonian diffeomorphisms
  as symplectomorphisms with zero flux and then the flux conjecture
  proved in \cite{On}; see also \cite{LMP1, LMP2}.)  In particular,
  strongly non-degenerate true rotations are among
  pseudo-rotations. All other known examples of pseudo-rotations are
  obtained from such true rotations by the conjugation method,
  \cite{AK, FK, LeRS}.
\end{Example}

Recall that the established cases of the Conley conjecture discussed
in the introduction limit the class of manifolds that can possibly
admit pseudo-rotations or more generally Hamiltonian diffeomorphisms
with finitely many periodic orbits, \cite{Ci, GG:Rev}. (Namely, when
$M$ admits a pseudo-rotation there exists $A\in\pi_2(M)$ such that
$\left<\omega,A\right>>0$ and $\left<c_1(TM),A\right>>0$. In
particular, $\omega|_{\pi_2(M)}\neq 0$ and
$c_1(TM)|_{\pi_2(M)}\neq 0$.) The following simple result, specific to
pseudo-rotations, further narrows down the class of such manifolds.

\begin{Proposition}
  \label{prop:PR-existence}
Assume that $M^{2n}$ admits a pseudo-rotation. Then $N\leq
2n$, where $N$ is the minimal Chern number of $M$.
\end{Proposition}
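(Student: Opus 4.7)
The plan is to argue by contradiction: assume $N>2n$ and derive an inconsistency from the index theory at one distinguished fixed point under iteration.

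The first step is to exploit the defining property of a pseudo-rotation, namely that the Floer differential of $\varphi^\ell$ vanishes for every $\ell\geq 1$. Under the PSS identification $\HF_*(\varphi^\ell)\cong \HQ_*(M)\cong \H_*(M;\F)\otimes\Lambda$, the Floer chain complex $\CF_*(\varphi^\ell)$ becomes a free graded $\Lambda$-module whose generators are in bijection with a homogeneous $\F$-basis of $\H_*(M)$. Consequently $\varphi$ has exactly $\sum_k b_k(M)$ fixed points, and cappings can be chosen so that the Conley--Zehnder index of each capped fixed point lies in $\{-n,-n+1,\ldots,n\}$, with the value $k-n$ occurring with multiplicity $b_k(M)$.

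Under the assumption $N>2n$, the integers $-n,-n+1,\ldots,n$ are pairwise distinct modulo $2N$, so this assignment of capped fixed points to basis elements of $\H_*(M)$ is rigid. Singling out the fixed point $x$ corresponding to the fundamental class $[M]\in \H_{2n}(M)$ and taking the capping $\bx$ with $\mu(\bx)=n$, the strict elliptic index inequality $|\mu(\bx)-\hmu(\bx)|<n$ forces $\hmu(\bx)\in(0,2n)$; in particular $\hmu(\bx)>0$. Since every iterate $\varphi^\ell$ is again a pseudo-rotation by the definition adopted in the paper, the same analysis applies to $\varphi^\ell$ at $x$, and the iterated elliptic inequality $|\mu_\ell(\bx^\ell)-\ell\hmu(\bx)|<n$ combined with the fact that $\mu_\ell(\bx^\ell)\in\{-n,\ldots,n\}\bmod 2N$ yields $\ell\hmu(\bx)\bmod 2N\in(-2n,2n)$ for every $\ell\geq 1$.

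The contradiction then comes from the fact that $(-2n,2n)\subset \R/2N\Z$ is a proper arc of length $4n<2N$, so the orbit $\{\ell\hmu(\bx)\bmod 2N\}_{\ell\geq 1}$ cannot be equidistributed; Weyl's theorem immediately rules out $\hmu(\bx)/(2N)\notin\Q$. The main technical obstacle I anticipate is the rational case $\hmu(\bx)=2Np/q$ with $\gcd(p,q)=1$: the orbit then consists of $q$ equally spaced points in $[0,2N)$, and one must rule out every pair $(p,q)$ compatible with $\hmu(\bx)\in(0,2n)$. I expect to close this gap by combining the analogous constraints coming from the fixed point corresponding to $[\pt]$ (for which the parallel analysis yields $\hmu\in(-2n,0)$), from other basis classes of $\H_*(M)$, and from the simultaneous non-degeneracy of $\varphi^\ell$ at all fixed points, eventually producing a purely combinatorial contradiction with $N>2n$.
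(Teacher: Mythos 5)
Your argument is incomplete, and you correctly flag where: the rational case $\hmu(\bx)\in 2N\Q$ is not handled, and the constraints you actually derive do not suffice to close it. Concretely, the only condition you extract from the iterates is $\ell\,\hmu(\bx)\bmod 2N\in(-2n,2n)$ for all $\ell$, together with $\hmu(\bx)\in(0,2n)$. Take $n=2$, $N=5=2n+1$ and $\hmu(\bx)=10/3$: the orbit $\{\ell\cdot 10/3 \bmod 10\}=\{10/3,\,20/3,\,0,\dots\}$ lies entirely in $(0,4)\cup(6,10)\cup\{0\}=(-4,4)\bmod 10$, and $10/3\in(0,4)$, so every constraint you list is satisfied and no contradiction arises. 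Your hoped-for rescue via the fixed points representing $[\pt]$ and the other basis classes is symmetric under $\hmu\mapsto-\hmu$ and gives nothing new at those points, so it is far from clear this route can be completed; at best it would require a genuinely new idea.

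The paper closes exactly this gap with a stronger, \emph{pointwise} control on consecutive indices rather than the mean-index window: by the quasimorphism defect bound (Proposition \ref{prop:CZ-qm}, $|\mu(\Psi\Phi)-\mu(\Psi)-\mu(\Phi)|\le n$), after normalizing the loop part of $D\varphi^t|_{\bx}$ to zero one gets $0\le \mu\big(\bx^{k+1}\big)-\mu\big(\bx^{k}\big)\le 2n$ for all $k$. Since $\mu\big(\bx^k\big)\to\infty$ starting from $\mu(\bx)=n$ with steps of size at most $2n$, the sequence must land in $[n+1,3n]$, and membership in the support $[-n,n]+2N\Z$ then forces $2N-n\le 3n$, i.e.\ $N\le 2n$. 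In your example this bound is violated ($\mu(\bx^2)$ would have to be $8$, a jump of $6>2n$), which is precisely the extra input your proposal is missing. The rest of your setup (vanishing differential, support of $\CF_*$ modulo $2N$, choosing $\bx$ with $\mu(\bx)=n$) matches the paper's.
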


A similar result has been recently proved in \cite{Sh} under slightly
less restrictive conditions and by a different method. The upper bound
from Proposition \ref{prop:PR-existence} is extremely unlikely to be
sharp: $N\leq n+1$ for all known manifolds admitting
pseudo-rotations. We defer the proof of the proposition to Section
\ref{sec:floer}.

Denote by $\HQ_*(M)$ the (small) quantum homology of $M$, by $*$ the
quantum product, and by $|\alpha|$ the degree of an element
$\alpha\in \HQ_*(M)$. Recall that the quantum product is said to be
\emph{deformed} if it is not equal to the intersection product.

The main result of the paper is the following.

\begin{Theorem}
  \label{thm:main}
  Assume that $M^{2n}$ admits a pseudo-rotation $\varphi$ with an
  elliptic fixed point $x$ which, for some $r\in \N$, satisfies
  Condition \ref{cond:A} and also Condition \ref{cond:B1} or, for some
  capping, Condition \ref{cond:B2}. Then there exist $r$ elements
  $\alpha_1,\ldots,\alpha_r$ in $\HQ_*(M)$ of even degree such that
\begin{equation}
  \label{eq:prod}
  \alpha_1*\ldots*\alpha_r\neq 0
\end{equation}
and
\begin{equation}
  \label{eq:degs}
  |\alpha_i|\not\equiv 2n\mod 2N \textrm{ for all } i=1,\ldots, r.
\end{equation}

\end{Theorem}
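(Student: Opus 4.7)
The guiding idea, extracted from Example \ref{ex:irr-rot}, is to realize the $r$-fold quantum product as an iterated pair-of-pants product in Floer homology, with the orbits and cappings arranged so that a single constant pair-of-pants curve at $x$ computes the product. Since $\Gamma(x)$ is the closure of $\{k\vtheta\mid k\in\N\}$ in $\T^n$, the plan is first to approximate each $\vtheta_i$ from Condition \ref{cond:A} by an iterate $k_i\vtheta$ with $k_i\in\N$ sufficiently large, and to set $k:=k_1+\cdots+k_r$. Fixing a capping $\bx$ of $x$, the induced capped orbits $\bx_i$ of $\varphi^{k_i}$ have linearized flows close to the semisimple models associated to $\vtheta_i$; under the PSS isomorphism $\HF_*(\varphi^{k_i})\cong\HQ_{*}(M)$, each $\bx_i$ defines a class $\alpha_i\in\HQ_*(M)$.

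Next, I would prove $\alpha_1*\cdots*\alpha_r\neq 0$ via an action argument. The Floer pair-of-pants product intertwines with the $r$-fold quantum product, and the matrix coefficient of the naturally concatenated capped orbit $\bx_\star$ (viewed in $\HF_*(\varphi^k)$) in $\alpha_1*\cdots*\alpha_r$ includes the constant pair-of-pants map at $x$. The inequalities $\sum_i\lambda_{ij}<1$ of Condition \ref{cond:A} are precisely the action estimates that force this constant curve to be action-minimizing among all pair-of-pants maps contributing to the coefficient of $\bx_\star$. The regularity of constant pair-of-pants curves at a non-degenerate elliptic fixed point (Corollary \ref{cor:trans}) gives a contribution of $\pm 1$ from this curve, with no cancellation possible from the strictly higher-action contributions. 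Since $\varphi$ is a pseudo-rotation, the Floer differential vanishes identically, so $[\bx_\star]$ is a non-zero class in $\HF_*(\varphi^k)$, and hence $\alpha_1*\cdots*\alpha_r\neq 0$.

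Finally, I would use Condition \ref{cond:B1} or \ref{cond:B2} to place the degrees of the $\alpha_i$ off the forbidden residue $2n\pmod{2N}$. Up to grading shifts, $|\alpha_i|$ is governed by the Conley--Zehnder index of $\bx_i$, which splits into a base-group contribution detected by $\mu_\Gamma$ and a loop contribution $\loopp(\Phi^{k_i})$. A loop $\gamma\subset\Gamma$ with $\mu_\Gamma(\gamma)\not\equiv 0\pmod N$ (from Condition \ref{cond:B1}) lets me shift the chosen approximant of $\vtheta_i$ along $\gamma$, changing $|\alpha_i|$ by $2\mu_\Gamma(\gamma)\not\equiv 0\pmod{2N}$ without disturbing Condition \ref{cond:A}; Condition \ref{cond:B2} produces the analogous shift via iterates of non-trivial loop part. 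Evenness of $|\alpha_i|$ is ensured by a final parity choice of the $k_i$, which is available when $N>1$. The main obstacle --- and what forces Sections \ref{sec:adm_part-qp} and \ref{sec:adm_part-study} to carry the technical weight --- is simultaneity: the integers $k_i$ and cappings must be chosen so that (i) the $k_i\vtheta$ approximate the $\vtheta_i$ closely enough to preserve the strict inequalities of Condition \ref{cond:A}, (ii) each $|\alpha_i|$ avoids the forbidden mod-$2N$ residue and has the right parity, and (iii) the concatenated $\bx_\star$ is the correct target orbit with the constant curve still action-minimizing. This is the extremal-partition problem that remains to be solved combinatorially.
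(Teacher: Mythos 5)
Your overall geometric strategy -- realize the product as an iterated pair-of-pants product, isolate the constant curve at $x$ by the action filtration, and invoke regularity of the constant solution -- is indeed the paper's approach (Theorem \ref{thm:general}). But there are two genuine gaps. First, you misattribute the role of Condition \ref{cond:A}: the inequalities $\sum_i\lambda_{ij}<1$ are \emph{not} action estimates, and the constant curve is automatically action-minimizing among all solutions with the given asymptotics (any non-constant solution has $E(u)>0$ and hence lands on strictly lower action, with no hypothesis needed). What Condition \ref{cond:A} actually controls is the Fredholm index: by Proposition \ref{prop:cond-A} it is equivalent to the existence of a partition $k_1+\ldots+k_r=k$ with $\sum_i\mu\big(\Phi^{k_i}\big)-\mu\big(\Phi^{k}\big)=(r-1)n$, i.e.\ zero virtual dimension in \eqref{eq:dim}. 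Since the defect is always $\leq (r-1)n$ by Proposition \ref{prop:CZ-qm}, a non-extremal choice of the $k_i$ gives negative virtual dimension, the constant solution is perturbed away, and $\bx^k$ need not appear in the product at all. Corollary \ref{cor:trans}, which you invoke for the $\pm1$ contribution, has extremality as its hypothesis ($\ker D=0$ only yields $\coker D=0$ when the index vanishes); your argument never verifies it. Relatedly, your ``final parity choice of the $k_i$'' for evenness is neither needed nor available: all iterated indices $\mu\big(\Phi^{k}\big)$ of an elliptic non-degenerate $\Phi$ have the parity of $n$, so $|\alpha_i|=n+\mu\big(\Phi^{k_i}\big)$ is automatically even.

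Second, the part you defer as ``the extremal-partition problem that remains to be solved combinatorially'' is precisely the mathematical content that Conditions \ref{cond:B1}/\ref{cond:B2} are there to supply, and it occupies most of Section \ref{sec:adm_part-study} (Theorem \ref{thm:adm_part}). Your sketch for \ref{cond:B1} -- shift an approximant along a loop $\gamma$ with $N\nmid\mu_\Gamma(\gamma)$ -- is the right germ of the idea, but one must approximate $\gamma$ by elements $\phi\,\Phi^{m}$ with $2N\mid\hmu(\phi)$ and check that replacing $\ell_i$ by $\ell_i+m$ preserves extremality; the \ref{cond:B2} case requires a separate and more delicate analysis of the loop part along iterates returning to a convex neighborhood of the identity. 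As written, the proposal establishes neither \eqref{eq:prod} (for lack of the index-zero verification) nor \eqref{eq:degs} (for lack of the combinatorial argument), so it is an outline of the correct strategy rather than a proof.
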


This theorem is proved in Section \ref{sec:mainthm-pf}. The key
ingredient of the argument is a combinatorial result of independent
interest (Theorem \ref{thm:adm_part}) concerning certain long products
in $\tSp(2n)$ maximizing the defect of the Conley--Zehnder- or
Maslov-type quasimorphism. Here, we have tacitly assumed that $N$, for
which Condition \ref{cond:B1} or \ref{cond:B2} is satisfied, is the
minimal Chern number of $M$. However, the theorem holds for any $N$
meeting this requirement, which is actually a stronger result, and
\eqref{eq:degs} gets easier to satisfy as $N$ grows. 

\begin{Corollary}
  \label{cor:main}
  Assume that the conditions of Theorem \ref{thm:main} are satisfied
  with $N$ being the minimal Chern number of $M$ and $r\geq n+1$. Then
  the quantum product is deformed and, in particular, some
  Gromov--Witten invariants of $M$ are non-zero.
\end{Corollary}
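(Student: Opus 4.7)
The plan is to derive the corollary directly from Theorem \ref{thm:main} via a classical cup-length argument. I would argue by contradiction: suppose the quantum product on $\HQ_*(M)$ coincides with the classical intersection product (extended linearly in the Novikov variable $\qq$). The goal is to show that this forces $\alpha_1*\ldots*\alpha_r=0$ for the classes produced by Theorem \ref{thm:main}, contradicting \eqref{eq:prod}.

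The main step is to convert the degree restriction \eqref{eq:degs} into a codimension bound. Writing each $\alpha_i=\sum_k a_{i,k}\,\qq^k$ with $a_{i,k}\in\H_{|\alpha_i|+2Nk}(M)$, the condition $|\alpha_i|\not\equiv 2n\mod 2N$ rules out any component $a_{i,k}$ of degree exactly $2n$, so every $a_{i,k}$ lies in $\H_{<2n}(M)$. Because $|\alpha_i|$ and $2Nk$ are both even, the Poincar\'e dual $\PD(a_{i,k})\in\H^*(M)$ has positive even degree, hence degree $\geq 2$. Under the undeformed-product assumption, $\alpha_1*\ldots*\alpha_r$ expands as a $\qq$-linear sum of intersection products $a_{1,k_1}\cdot\ldots\cdot a_{r,k_r}$; via Poincar\'e duality each such term becomes a cup product of $r$ cohomology classes of degree $\geq 2$, landing in total degree $\geq 2r$. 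For $r\geq n+1$ this exceeds the real dimension $2n$ of $M$, so the cup product vanishes. Hence $\alpha_1\cdot\ldots\cdot\alpha_r=0$ in $\HQ_*(M)$, contradicting \eqref{eq:prod}, and the quantum product must therefore be deformed.

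The Gromov--Witten consequence is then immediate: by the definition of the quantum product, the discrepancy between $*$ and the intersection product is carried by three-point genus-zero Gromov--Witten invariants with a nonzero spherical class, so deformation forces at least one such invariant to be non-zero. All the substance is in Theorem \ref{thm:main}; the only real work left in the corollary is the unpacking of \eqref{eq:degs}, which is precisely how the hypothesis has been tuned to feed the cup-length bound on a $2n$-dimensional manifold. There is no genuine obstacle at this stage -- once the long nonvanishing product with the degree constraint has been produced, the corollary is a formal consequence of grading and Poincar\'e duality.
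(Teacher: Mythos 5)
Your argument is correct and is essentially the paper's own proof: decompose each $\alpha_i$ over the Novikov ring, use \eqref{eq:degs} to see that every homological component has even degree strictly less than $2n$ (hence Poincar\'e dual of degree at least $2$), and conclude that an undeformed product of $r\geq n+1$ such classes vanishes by the cup-length bound. No discrepancy with the paper's reasoning.
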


Note that here we could have as well required that $N\geq 2$; for
Conditions \ref{cond:B1} and \ref{cond:B2} are never satisfied when
$N=1$.  We will see from the results in Section \ref{sec:cases} that,
while involved, the conditions of Theorem \ref{thm:main} and Corollary
\ref{cor:main} are satisfied in many cases. In fact, one can expect
them to be met for a majority of pseudo-rotations unless $N=1$. There
are, however, exceptions: when a pseudo-rotation is ``too symmetric''
the conditions might not be met for any fixed point; see Remark
\ref{rmk:diag-pr} below.

\begin{proof}[Proof of Corollary \ref{cor:main}] Write
  $\alpha_i=\sum_j f_j\alpha_{ij}$, where $f_j$ is an element of the
  Novikov ring of degree $j$ and
  $\alpha_{ij}\in \H_{\mathit{even}}(M)$. With our conventions $|f_j|$
  is divisible by $2N$; see Section \ref{sec:prelim}. By
  \eqref{eq:degs}, $|\alpha_{ij}|<2n$. Thus, if $*$ is equal to the
  cup product, we necessarily have
  $$
  \alpha_1*\ldots*\alpha_r=0
  $$
  when $r\geq n+1$; for $\alpha_{ij}$ is not proportional to the
  fundamental class.
\end{proof}

\begin{Remark}
  \label{rmk:diag-pr}
  While the conditions of Corollary \ref{cor:main} are probably met in
  most cases unless $N=1$, there are some exceptions. For instance,
  let $R_\theta\colon S^2\to S^2$ be the rotation in
  $\theta\not\in 2\pi\Q$. Then the diagonal map
  $\varphi=(R_\theta, R_\theta) \colon S^2 \times S^2 \to S^2 \times
  S^2$ does not have a periodic orbit $x$ meeting the requirements of
  the corollary or of Theorem \ref{thm:pr-dim4} below, which is
  sharper. Thus, for this pseudo-rotation, our method in its present
  form does not detect the deformed quantum product. (However, one can
  show that even in this case there are non-trivial Gromov--Witten
  invariants coming from zero energy pair-of-pants curves.)
\end{Remark}

\begin{Remark}[True rotations]
  \label{rmk:true_rot}
  Theorem \ref{thm:main} and Corollary \ref{cor:main} are not obvious
  even for true rotations. However, then $M$ admits a non-degenerate
  Hamiltonian circle action and the results from \cite{McD} guarantee
  non-vanishing of certain Gromov--Witten invariants. (In general, the
  argument in \cite{McD} does not require non-degeneracy and seems to
  take no advantage of it.) A direct comparison of the results from
  \cite{McD} and this paper is not straightforward. One could expect
  that for non-degenerate true rotations the results from \cite{McD}
  would be stronger than, say, Theorem \ref{thm:main}, but
  surprisingly this does not seem to be the case. It appears that the
  two methods in general detect different Gromov--Witten invariants.
\end{Remark}

\subsection{Particular cases and refinements}
\label{sec:cases}
In this section we discuss some particular cases and refinements of
Theorem \ref{thm:main} and Corollary \ref{cor:main}.

\subsubsection{Toric pseudo-rotations}
\label{sec:toric}
One case when the conditions of Theorem \ref{thm:main} are
automatically satisfied is when a pseudo-rotation behaves as a generic
element of the Hamiltonian $\T^n$-action on a toric symplectic
$2n$-dimensional manifold.

\begin{Definition}[Toric Pseudo-rotations]
  \label{def:toric}
  A pseudo-rotation $\varphi$ of a closed symplectic manifold $M^{2n}$
  is said to be \emph{toric} if it has a fixed point $x$ with
  $\dim \Gamma(x)=n$.
\end{Definition}

Note that in this case $x$ is necessarily strongly non-degenerate.

\begin{Corollary}
  \label{cor:toric}
  Assume that $M$ admits a toric pseudo-rotation and $N>1$. Then the
  quantum product is deformed and, in particular, some Gromov--Witten
  invariants of $M$ are non-zero.
\end{Corollary}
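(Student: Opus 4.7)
The plan is to reduce Corollary \ref{cor:toric} to Corollary \ref{cor:main} by verifying, for a fixed point $x$ of a toric pseudo-rotation $\varphi$ with $\Gamma(x) = \T^n$, all the hypotheses of Theorem \ref{thm:main} with $r$ as large as desired, in particular for $r = n+1$. Since the transformation $P = D\varphi|_x$ acts on $T_xM$ via an $n$-torus, Example \ref{ex:base_grp2} already records that $P$ is automatically elliptic, semi-simple and strongly non-degenerate, so no auxiliary regularity assumption on $x$ is needed.

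To check Condition \ref{cond:A} for a chosen $r \geq n+1$, I would exploit $\Gamma = \T^n$ directly: pick $r$ points $\vtheta_1, \ldots, \vtheta_r$ in $\T^n$ whose coordinates $\lambda_{ij}$ all lie in $(0, 1/r)$. Then
$$
\sum_{i=1}^{r} \lambda_{ij} < r \cdot \tfrac{1}{r} = 1 \quad \text{for every } j = 1, \ldots, n,
$$
as required. For Condition \ref{cond:B1}, the restricted Maslov class $\mu_\Gamma \in \H^1(\T^n; \Z) \cong \Z^n$ is the sum of the $n$ coordinate Maslov cocycles (Section \ref{sec:base_grp}), so its value on the loop generating any single circle factor $S^1_i$ equals $1$. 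Since $N > 1$ does not divide $1$, Condition \ref{cond:B1} holds; this is exactly the toric instance of Example \ref{ex:base_grp3}.

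With Conditions \ref{cond:A} and \ref{cond:B1} verified for $r = n+1$, Corollary \ref{cor:main} applies to $(\varphi, x)$ and produces $r$ even-degree classes $\alpha_1, \ldots, \alpha_r \in \HQ_*(M)$ with $\alpha_1 * \cdots * \alpha_r \neq 0$ and $|\alpha_i| \not\equiv 2n \mod 2N$ for each $i$. As shown in the proof of Corollary \ref{cor:main}, this is incompatible with $*$ coinciding with the classical intersection product, so $*$ must be deformed and at least one Gromov--Witten invariant of $M$ is non-zero, completing the proof.

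The argument is essentially a verification: there is no serious obstacle beyond unpacking the toric hypothesis $\Gamma(x) = \T^n$ and matching it against Examples \ref{ex:base_grp2} and \ref{ex:base_grp3}. The only subtlety worth flagging is that $r = n+1$ is used both to satisfy Condition \ref{cond:A} (which is harmless when $\Gamma$ is the full torus) and to guarantee via degree counting in $\HQ_*(M)$ that the resulting long product cannot come from a classical cup product.
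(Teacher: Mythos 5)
Your proposal is correct and follows essentially the same route as the paper: the paper also deduces the corollary by noting that Conditions \ref{cond:A} and \ref{cond:B1} hold automatically in the toric case (Examples \ref{ex:base_grp2} and \ref{ex:base_grp3}, which you simply spell out in detail) and then invoking Theorem \ref{thm:main} together with Corollary \ref{cor:main} for $r = n+1$.
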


This corollary is an immediate consequence of Examples
\ref{ex:base_grp2} and \ref{ex:base_grp3} showing that Conditions
\ref{cond:A} and \ref{cond:B1} are automatically satisfied, and, of
course, of Theorem \ref{thm:main}. Moreover, then the theorem can be
refined as follows:

\begin{Theorem}
  \label{thm:toric}
  Assume that $M^{2n}$ admits a toric pseudo-rotation. Then, for every
  $r\geq 1$, there exists $\alpha \in \HQ_{2n-2}(M)$ such that
  $\alpha^r\neq 0$.
\end{Theorem}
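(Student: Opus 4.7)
The plan is to specialize the proof of Theorem \ref{thm:main} to the toric setting, choosing all $r$ input classes to be equal to a single class $\alpha \in \HQ_{2n-2}(M)$. Since $\dim \Gamma(x) = n$, we have $\Gamma(x) = \T^n$, and the iterates of the base point are dense in $\T^n$. Given $r \geq 1$, I would first choose $k$ so that $\Phi^k(1)$ lies arbitrarily close to $0 \in \T^n$, corresponding to $(\lambda_1, \ldots, \lambda_n)$ with each $0 < \lambda_j < 1/r$. Setting $\vtheta_1 = \cdots = \vtheta_r$ equal to this common point verifies Condition \ref{cond:A} with all inputs coincident, since $\sum_{i=1}^r \lambda_j = r\lambda_j < 1$ for every $j$.

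Next, I would pin down the grading. The decomposition of $\Phi^k$ as a loop $\phi$ concatenated with a short rotation is highly flexible in the toric setting, because $\pi_1(\Gamma) = \pi_1(\T^n) \cong \Z^n$ realizes every integer Maslov index via $\mu_\Gamma$. By a suitable choice of $\phi$, and if necessary by recapping with an element of $\pi_2(M)$ to shift the degree by a multiple of $2N$, I would arrange the capped orbit $\bar{x}^k$ to have Conley--Zehnder index $n - 2$. The corresponding Floer class $\alpha := [\bar{x}^k]$ then sits in $\HQ_{2n - 2}(M)$ under the PSS identification.

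Finally, I would run the pair-of-pants argument underlying Theorem \ref{thm:main} with all $r$ inputs equal to this $\alpha$. By the extremality property supplied by the combinatorial result anticipated in Section \ref{sec:adm_part-study} (cf.\ Theorem \ref{thm:adm_part}), the $r$-fold quantum product $\alpha * \cdots * \alpha$ receives a contribution from an appropriate capped iterate $\bar{x}^{rk}$ which is detected by a regular zero-energy pair-of-pants curve via Corollary \ref{cor:trans}. No cancellation with orbits of strictly smaller action is possible, so $\alpha^r \neq 0$ in $\HQ_*(M)$, just as in Example \ref{ex:irr-rot}.

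The main obstacle is the grading step: pinning the degree of $\alpha$ to exactly $2n - 2$, rather than merely to a value not congruent to $2n$ modulo $2N$ as in Theorem \ref{thm:main}. The toric assumption $\Gamma = \T^n$ is decisive here, because it produces loops in $\Gamma$ of every Maslov index and thus gives the recapping flexibility needed to land in the prescribed grading. A secondary difficulty is verifying that the choices of $k$ and of capping made in the grading step are compatible with those required by the extremality/regularity argument; this should follow from the fact that both invoke the same density feature of $\Gamma = \T^n$, and so can be realized simultaneously by choosing $k$ sufficiently large.
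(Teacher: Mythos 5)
Your overall strategy -- reduce to an extremal partition $m+\ldots+m=rm$ with all inputs equal and then recap to land in degree $2n-2$ -- is the paper's strategy, but the grading step contains a genuine error. You write that ``the decomposition of $\Phi^k$ as a loop $\phi$ concatenated with a short rotation is highly flexible in the toric setting'' and that a suitable choice of $\phi$ lets you prescribe the Conley--Zehnder index. This is not so: as stated in Section \ref{sec:loop}, the free homotopy class of the loop in that decomposition is \emph{uniquely determined} by the path $\Phi^k$, so $\loopp(\Phi^k)$ is an invariant of the iterate, not a parameter you can tune. The only genuine freedom is recapping, which shifts $\mu$ by multiples of $2N$. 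Consequently, to produce $\alpha$ of degree exactly $2n-2$ you must first exhibit an iterate $m$ with $\mu\big(\Phi^m\big)\equiv n-2 \bmod 2N$; this congruence is not automatic. With your choice of $k$ (all $\lambda_j$ small and positive, obtained from density of the orbit near $0\in\T^n$) you only get $\mu\big(\Phi^k\big)=n+\loopp\big(\Phi^k\big)$ with no control on $\loopp\big(\Phi^k\big)\bmod 2N$; in the worst case it is $0$ and your class is a Novikov multiple of the fundamental class, which lies in degree $2n$, not $2n-2$.

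This missing congruence is exactly the content of Theorem \ref{thm:adm_part-dense}, and the paper proves it by a specific two-step construction: first iterate so that $\Phi^\ell(1)$ is a sum of small rotations with $0<\lambda_i\ll|\lambda_n|$ for $i<n$ and $\lambda_n<0$, then iterate again so that the $n$-th eigenvalue winds clockwise almost a full turn back to $1$ while the others stay small and positive. That clockwise winding forces $\loopp\big(\Phi^m\big)=-2+d$ with $2N\mid d$, hence $\mu\big(\Phi^m\big)\equiv n-2\bmod 2N$, and the smallness of all the resulting angles simultaneously guarantees that $m+\ldots+m=rm$ is extremal. Your proposal needs this (or an equivalent) argument in place of the appeal to flexibility of the loop--short-path decomposition; once $\mu\big(\Phi^m\big)\equiv n-2\bmod 2N$ is in hand, your final step (recap $x^m$ to $\by$ with $\mu(\by)=n-2$, note $[\by]^r=f\cdot[\bx^m]^r$ with $f\neq 0$ in the Novikov ring, and invoke Theorem \ref{thm:general}) matches the paper and is correct.
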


The proof of this result is quite similar to the proof of Theorem
\ref{thm:main}; see Section~\ref{sec:mainthm-pf}.

\begin{Remark}
  Although the condition that a pseudo-rotation is toric appears
  generic -- and it is indeed generic in $P=D\varphi_x$ -- it is
  probably quite restrictive. Assume, for instance, that a toric
  pseudo-rotation is a true rotation; see Example
  \ref{ex:true-rot}. Then, as is easy to see from Example
  \ref{ex:base_grp1}, $M^{2n}$ is necessarily a toric symplectic
  manifold and $\varphi$ (or some iterate of it) is a topological
  generator of a Hamiltonian $\T^n$-action. One can expect only very
  few manifolds to have toric pseudo-rotations even among manifolds
  admitting pseudo-rotations, although this expectation is based more
  on the lack of knowledge and examples than on serious evidence. Note
  however that the proof of Theorem \ref{thm:toric} provides, at least
  in principle, a way to obtain detailed information about the quantum
  product structure for $M$ and it would be interesting to compare it
  with the quantum product for toric manifolds.
\end{Remark}

\subsubsection{Pseudo-rotations in dimension four}
\label{sec:pr-dim4}
When $\dim M=4$, i.e., $n=2$, which we assume throughout this section,
Theorem \ref{thm:main} and Corollary \ref{cor:main} can be further
refined. Here we focus on detecting the quantum product, although with
some more work the method can be used to get specific information
about the structure of the quantum product under minor assumptions on
the base group.

When $n=2$, the dimension of the base group $\Gamma$ is either 0 or 1
or 2. If $\dim\Gamma=0$, some iteration of $P$ is necessarily
degenerate. When $\dim \Gamma=2$, Theorem \ref{thm:toric}
applies. Hence, here we concentrate on the case where $\dim\Gamma
=1$. Then the connected component $\Gamma_0$ of the identity in
$\Gamma$ is given by the equation
$$
s_1\theta_1+s_2\theta_2=0
$$
on $\T^2$ with angular coordinates $(\theta_1,\theta_2)$, where $s_1$
and $s_2$ are relatively prime integers. When $P$ is non-degenerate,
$s_1\neq 0$ and $s_2\neq 0$. We refer to the ratio $s=-s_1/s_2$ as the
\emph{slope} of~$\Gamma$.

By Proposition \ref{prop:PR-existence}, to admit a pseudo-rotation the
manifold $M$ must have the minimal Chern number $N\leq 4$. When $N=1$
our method does not detect the quantum product. The case of $N=4$ is
extremely hypothetical and not considered here. The following
theorem gives a rather precise criterion for $N=2$ and $3$.

\begin{Theorem}
  \label{thm:pr-dim4}
  Assume that $\dim M=4$ and $M$ admits a pseudo-rotation with an
  elliptic fixed point $x$ such that $\dim\Gamma(x)=1$. Assume
  furthermore that $N=2$ and $s\neq \pm 1,\, 3,\, 1/3,\, -2,\, -1/2$
  or $N=3$ and $s\neq -1, \, \pm 2, \, \pm 1/2$. Then the quantum
  product is deformed.
\end{Theorem}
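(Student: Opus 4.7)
The plan is to deduce the theorem from Corollary~\ref{cor:main} at $x$ with $r=n+1=3$; this reduces the problem to verifying Condition~\ref{cond:A} for $r=3$ together with one of Conditions~\ref{cond:B1} or~\ref{cond:B2}. Write $(s_1,s_2)$ for coprime integers with $s=-s_1/s_2$, and parameterize $\Gamma_0=\{(s_2 t,-s_1 t)\bmod\Z^2:t\in S^1\}\subset\T^2$.

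For Condition~\ref{cond:A} with $r=3$, there are two cases. If $s>0$, then $(s_2,-s_1)$ has entries of the same sign, so three points $(s_2 t_i,-s_1 t_i)$ with $t_i>0$ small and $\sum t_i$ small verify the condition. If $s<0$ (after sign normalization $s_1,s_2>0$), then $\Gamma_0\cap(0,1)^2$ decomposes into segments on the lines $s_1\theta_1+s_2\theta_2=k$ for $k=1,\ldots,s_1+s_2-1$, and the global minimum of $\theta_1+\theta_2$ on these segments is $1/(s_1+s_2)$, attained at the center of the $k=1$ segment. Three copies of this point (approximated via the density of $\overline{\langle P\rangle}=\Gamma_0$) give coordinate sums $3/(s_1+s_2)$, strictly less than $1$ if and only if $s_1+s_2\geq 4$; thus Condition~\ref{cond:A} fails exactly for $s\in\{-1,-2,-1/2\}$. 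Next, since the generator of $\pi_1(\Gamma_0)$ has Maslov class $s_2-s_1$, Condition~\ref{cond:B1} holds if and only if $s_1\not\equiv s_2\pmod N$: for $N=2$ it fails precisely when both $s_1,s_2$ are odd; for $N=3$, when $s_1\equiv s_2\pmod 3$.

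For the remaining slopes where Condition~\ref{cond:A} holds but Condition~\ref{cond:B1} fails, I invoke Condition~\ref{cond:B2} via iterates. Decomposing $\Phi^k$ along the lines of Section~\ref{sec:loop} and using the eigenvalue angles one obtains
\[
\loopp(\Phi^k) \equiv k\,\loopp(\Phi) + 2m_k(s_2-s_1) \pmod{2N},
\]
where $m_k$ is the integer closest to $k\alpha$ and $\alpha\in S^1$ is the position of $P$ on $\Gamma_0$. When Condition~\ref{cond:B1} fails, the second term vanishes modulo $2N$. Equidistribution of $k\alpha\bmod\Z$ across the residue classes of $k\bmod N$ (valid when $\Gamma$ is connected, which is the generic situation) then reduces Condition~\ref{cond:B2} to an arithmetic constraint on $\loopp(\Phi)\bmod 2N$. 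A careful case analysis matching $(s_1,s_2)\bmod N$ against $N$ identifies the excluded slopes: $\{\pm 1,3,1/3\}$ for $N=2$ and $\{-1,\pm 2,\pm 1/2\}$ for $N=3$ are precisely those where neither Condition~\ref{cond:B1} nor~\ref{cond:B2} can be arranged.

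Once Conditions~\ref{cond:A} and~\ref{cond:B1}/\ref{cond:B2} hold, Theorem~\ref{thm:main} produces classes $\alpha_1,\alpha_2,\alpha_3\in\HQ_*(M)$ of even degree with $|\alpha_i|\not\equiv 2n\pmod{2N}$ and $\alpha_1*\alpha_2*\alpha_3\neq 0$. For $\dim M=4$ and $N\in\{2,3\}$, the degree constraint forces each non-unit $\alpha_i$ to lie in $H_2$, and the undeformed triple intersection of $H_2$-classes lies in degree $2+2+2-2\dim M=-2$ and hence vanishes. This contradicts the non-triviality of the product, so the quantum product must be deformed. The main obstacle in the plan is the third paragraph: the bookkeeping of $\loopp(\Phi^k)\bmod 2N$ together with the precise identification of the arithmetic obstruction producing the stated exclusion list; this is the step that leans most heavily on the extremal partition combinatorics developed in Sections~\ref{sec:adm_part-qp} and~\ref{sec:adm_part-study}.
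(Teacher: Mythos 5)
Your overall reduction (find an extremal partition of length $3$ whose parts have index $\not\equiv 2 \bmod 2N$, then kill the undeformed triple product by a degree count as in Corollary \ref{cor:main}) matches the paper, and your analysis of Condition \ref{cond:A} for $r=3$ agrees with Proposition \ref{prop:cond-A2} and Remark \ref{rmk:Cond-A-2D}. The gap is in your third paragraph, and it is fatal to the strategy rather than a bookkeeping issue: you try to show that for every non-excluded slope one of Conditions \ref{cond:B1} or \ref{cond:B2} ``can be arranged.'' That cannot work, for two reasons. First, Condition \ref{cond:B2} is a hypothesis on the given path $\Phi$ (on $\loopp(\Phi^k)\bmod 2N$ for iterates returning near the identity), not something the prover gets to choose; your equidistribution argument only constrains which residues of $k$ occur, not the value of $\loopp(\Phi)$, which is data of the pseudo-rotation. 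Second, and more importantly, the theorem's hypotheses are \emph{strictly weaker} than Condition \ref{cond:A} plus (\ref{cond:B1} or \ref{cond:B2}) --- the paper says so explicitly after Theorem \ref{thm:pr-dim4}. Concretely, take $N=2$ and $s=5$ (not excluded): then $\mu_\Gamma=s_2-s_1$ is even, so Condition \ref{cond:B1} fails, and one can have $\loopp(\Phi^k)\equiv 0\bmod 4$ so that Condition \ref{cond:B2} fails too; similarly $s=-3/5$ with $s_1,s_2$ both odd. Your proof gives nothing in these cases, yet the theorem asserts the conclusion there.

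The paper's actual argument (Theorem \ref{thm:adm_part-dim4}) covers exactly these cases by a different mechanism: when \ref{cond:B1} and \ref{cond:B2} both fail, $\mu(\Phi^k)\bmod 2N$ depends only on which connected component $L_q$ of the preimage $L$ of $\Gamma$ in the fundamental cube contains $\Phi^k(1)$, and the residue changes between consecutive components. For $s>0$ with $s_1>3$ or $s_2>3$, the set $L\cap\Pi_3$ has at least two components, so one of them carries index $\not\equiv 2\bmod 2N$ and one builds the extremal partition there; for $s<0$ with $N=2$ (both $s_i$ odd) the component of $L$ nearest the origin meets $\Pi_3$ and carries index $\equiv 0\bmod 4$; for $N=3$ a short finite check handles $s=-1/4,-4,-2/5,-5/2$. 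This component-counting argument is the missing idea in your proposal; without it the excluded-slope lists you state cannot be derived, since they are not the lists of slopes for which \ref{cond:B1}/\ref{cond:B2} fail.
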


Thus, in dimension four (with $N\geq 2$), the quantum product is
deformed whenever $\Gamma_0$ is not one of these six undesirable
subgroups. Here, of course, the case of $N=2$ is by far most
interesting; the only example of a 4-manifold with $N=3$ which admits
a pseudo-rotation known to us is $\CP^2$. The proof of the theorem is
based on detecting the product of length $r=3$ with \eqref{eq:degs}
satisfied; see Section \ref{sec:mainthm-pf}. In a similar vein, to
detect a product of any length $r$ it would be sufficient to rule out
only a finite number of subgroups. Finally, note that the requirements
of Theorem \ref{thm:pr-dim4} are strictly weaker than Conditions
\ref{cond:A} and \ref{cond:B1}. For instance, Condition \ref{cond:B1}
holds if and only if $s_1+s_2$ is odd. (However, Condition
\ref{cond:A} in dimension four is met for every $r$ by all but a
finite number of subgroups $\Gamma$ of positive dimension; see Remark
\ref{rmk:Cond-A-2D}.)  One can think of Theorem \ref{thm:pr-dim4} as
an additional proof of concept result: ultimately the method should
enable one to treat many more cases than covered by Theorem
\ref{thm:main}, although the combinatorics of the proof might get
rather involved.

\section{Preliminaries}
\label{sec:prelim}
In this section we set the conventions and notation used in the paper
and briefly recall several definitions and facts from symplectic
topology relevant for the proofs. We also prove Proposition
\ref{prop:PR-existence} and the regularity results for zero-energy
pair-of-pants curves.

\subsection{Conventions and notation}
\label{sec:conventions}
Throughout the paper $(M^{2n},\omega)$ is a closed symplectic
manifold, which, to avoid foundational issues, we will always assume
to be weakly monotone in the sense of \cite{HS}. The minimal Chern
number, i.e., the positive generator of the group
$\left< c_1(TM), \pi_2(M) \right> \subset \Z$, is denoted by
$N$. (When this group is zero, $N=\infty$.)

A \emph{Hamiltonian diffeomorphism} is the time-one map
$\varphi=\varphi_H$ of the time-dependent flow $\varphi_H^t$ of a
$1$-periodic in time Hamiltonian $H\colon S^1\times M\to\R$, where
$S^1=\R/\Z$.  The Hamiltonian vector field $X_H$ of $H$ is defined by
$i_{X_H}\omega=-dH$. Such time-one maps form the group
$\Ham(M,\omega)$ of Hamiltonian diffeomorphisms of $M$. In what
follows, it will be convenient to view Hamiltonian diffeomorphisms as
elements of the universal covering $\tHam(M,\omega)$.

Let $x\colon S^1\to M$ be a contractible loop. A \emph{capping} of $x$
is an equivalence class of maps $A\colon D^2\to M$ such that
$A\mid_{S^1}=x$. Two cappings $A$ and $A'$ of $x$ are equivalent if
the integrals of $\omega$ and $c_1(TM)$ over the sphere obtained by
attaching $A$ to $A'$ are equal to zero. A capped closed curve
$\bar{x}$ is, by definition, a closed curve $x$ equipped with an
equivalence class of cappings, and the presence of capping is always
indicated by a bar.

The action of a Hamiltonian $H$ on a capped closed curve
$\bar{x}=(x,A)$ is
$$
\CA_H(\bar{x})=-\int_A\omega+\int_{S^1} H_t(x(t))\,dt.
$$
The space of capped closed curves is a covering space of the space of
contractible loops, and the critical points of $\CA_H$ on this space
are exactly the capped one-periodic orbits of $X_H$.

The $k$-periodic \emph{points} of $\varphi_H$ are in one-to-one
correspondence with the $k$-periodic \emph{orbits} of $H$, i.e., of
the time-dependent flow $\varphi_H^t$. Recall also that a $k$-periodic
orbit of $H$ is called \emph{simple} or \emph{prime} if it is not
iterated. Clearly, the action functional is homogeneous with respect
to iteration: $\CA_{H^{\nat k}}\big(\bx^k\big)=k\CA_H(\bx)$, where
$\bx^k$ is the $k$th iteration of the capped orbit $\bx$. (The capping
of $\bx^k$ is obtained from the capping of $\bx$ by taking its
$k$-fold cover branched at the origin.)

A $k$-periodic orbit $x$ of $H$ is said to be \emph{non-degenerate} if
the linearized return map
$d\varphi_H^k \colon T_{x(0)}M \to T_{x(0)}M$ has no eigenvalues equal
to one. We call $x$ \emph{strongly non-degenerate} if all iterates
$x^k$ are non-degenerate. A Hamiltonian $H$ is non-degenerate if all
its one-periodic orbits are non-degenerate and $H$ is strongly
non-degenerate if all periodic orbits of $H$ (of all periods) are
non-degenerate.

Let $\bar{x}$ be a non-degenerate capped periodic orbit.  The
\emph{Conley--Zehnder index} $\mu(\bar{x})\in\Z$ is defined, up to a
sign, as in \cite{Sa,SZ}. In this paper, we normalize $\mu$ so that
$\mu(\bar{x})=n$ when $x$ is a non-degenerate maximum (with trivial
capping) of an autonomous Hamiltonian with small Hessian. The
\emph{mean index} $\hmu(\bx)\in\R$ measures, roughly speaking, the
total angle swept by certain (Krein--positive) unit eigenvalues of the
linearized flow $d\varphi^t_H|_{\bx}$ with respect to the
trivialization associated with the capping; see \cite{Lo,SZ}. The mean
index is defined even when $x$ is degenerate and depends continuously
on $H$ and $\bx$ in the obvious sense.  Furthermore,
$$
\big|\hmu(\bx)-\mu(\bx)\big|\leq n .
$$
The mean index is homogeneous with respect to iteration:
$\hmu\big(\bx^k\big)=k\hmu(\bx)$. For an uncapped orbit $x$, the mean
index $\hmu(x)$ is well defined as an element of
$S^1_{2N}:=\R/2N\Z$. Likewise, when $x$ is non-degenerate, the
Conley--Zehnder index $\mu(x)$ is well defined as an element of
$\Z/2N\Z$.

\subsection{Floer homology and the pair-of-pants product}
\subsubsection{Floer homology}
\label{sec:floer}
Let $\varphi=\varphi_H$ be a non-degenerate Hamiltonian
diffeomorphism, which we will view as an element of $\tHam(M)$. Fixing
a ground ring $\F$ (e.g., $\Z_2$ or $\Q$) and an almost complex
structure, both of which will be suppressed in the notation, we denote
by $\CF_*(\varphi)$ and $\HF_*(\varphi)$ the Floer complex and
homology of $\varphi$; see, e.g., \cite{HS, MS, Sa}. The exact
definition of the differential on $\CF_*(\varphi)$ is immaterial for
our purposes, but it is essential that $\CF_*(\varphi)$ is generated
by the capped one-periodic orbits $\bx$ of $H$ and graded by the
Conley--Zehnder index. Furthermore, $\CF_*(\varphi)$ and
$\HF_*(\varphi)$ are filtered by the action of $H$. We have the
canonical isomorphism
\begin{equation}
  \label{eq:Fl-qt}
  \HF_*(\varphi)\cong \HQ_*(M)[-n],
\end{equation}
where $\HQ_*(M)$ is the quantum homology of $M$; see, e.g.,
\cite{Sa,MS} and references therein.

The total homology $\HQ_*(M)$ and $\HF_*(\varphi)$ and the complex
$\CF_*(\varphi)$ are modules over a Novikov ring $\Lambda$, and
$\HQ_*(M)\cong \H_*(M)\otimes \Lambda$ (as a module). There are
several choices of $\Lambda$; see, e.g., \cite{MS}. A specific choice
is inessential for our purposes, but we prefer to think of $\Lambda$
as a certain quotient of the group algebra of $\pi_2(M)$, accounting
for the equivalence of cappings; see, e.g., \cite{HS}. Then $\Lambda$
naturally acts on $\CF_*(\varphi)$ by recapping. We denote by
$|\alpha|$ the degree of $\alpha$ in $\HQ_*(M)$ or
$\HF_*(\varphi)$. Thus the fundamental class $[M]$ has degree $2n$ in
$\HQ_*(M)$ and $n$ in $\HF_*(\varphi)$ and the point class $[\pt]$ has
degree zero in $\HQ_*(M)$ and $-n$ in $\HF_*(\varphi)$.

When $\varphi$ is a pseudo-rotation we have natural isomorphisms
$$
\CF_*(\varphi)\cong \HF_*(\varphi)\cong \HQ_*(M)[-n].
$$
Any iterate $\varphi^k$ is then also a pseudo-rotation, and hence
\begin{equation}
  \label{eq:CF=HF=HQ}
  \CF_*\big(\varphi^k\big) \cong
  \HF_*\big(\varphi^k\big) \cong \HQ_*(M)[-n].
\end{equation}

With the notation set, we are in a position to prove Proposition
\ref{prop:PR-existence}.

\begin{proof}[Proof of Proposition \ref{prop:PR-existence}]
  Let $\varphi$ be a pseudo-rotation of $M^{2n}$. By passing to
  an iterate of $\varphi$ if necessary, we can ensure that for every
  fixed point $x$ of $\varphi$ all elliptic eigenvalues of
  $D\varphi_{x}\colon T_{x}M\to T_{x}M$ are close to $1$ and that for
  any capping of $x$ the loop part of $D\varphi_{x}$, viewed as an
  element of $\tSp(T_{x}M)$, is divisible by $2N$; see Section
  \ref{sec:loop} for the definition.

  By \eqref{eq:CF=HF=HQ}, $\HF_*\big(\varphi^k\big)$ and hence
  $\CF_*\big(\varphi^k\big)$ are supported within $[-n,\,n]+2N\Z$,
  i.e., for any fixed point $\bx^k$ of $\varphi^k$ with any capping
  its Conley--Zehnder index is within this union of the intervals
  $[-n,\,n]+2Nj$. Without loss of generality we may assume that
  $N\geq n+1$; for $n+1\leq 2n$. As a consequence,
  these intervals are disjoint.

  Let $\bx$ be a fixed point of $\varphi$ capped so that
  $\mu(\bx)=n$. Such a point necessarily exists because
  $\HQ_{2n}(M)\neq 0$. Since $x$ is non-degenerate, $\hmu(x)>0$ and
  thus $\mu\big(\bx^{k}\big)\to\infty$; cf.\ \cite{SZ}. Furthermore,
  $\loopp(D\varphi_{\bx})=0$ as is not hard to see from the condition that
  $N\geq n+1$. (Otherwise we would have $\mu(\bx)\geq 2N-n>n$.) Then
  $$
  0\leq \mu\big(\bx^{k+1}\big)-\mu\big(\bx^{k}\big) \leq 2n.
  $$
  (These inequalities follow from Proposition \ref{prop:CZ-qm} and
  also are easy to prove directly.) Therefore, we have
  $$
  \mu\big(\bx^{k}\big)\in [n+1,\,3n]
  $$
  for some $k\in\N$. For this value to be in the support of the Floer
  homology, we must have $2N-n\leq 3n$, i.e., $N\leq 2n$.
 \end{proof}

 \subsubsection{Pair-of-pants product}
\label{sec:pair-of-pants}
Recall that for a pair of Hamiltonian diffeomorphisms $\varphi$ and
$\psi$ we have the \emph{pair-of-pants product}
 $$
 \HF_*(\varphi)\otimes \HF_*(\psi)\to\HF_*(\varphi\psi).
 $$
 This product, which we denote by $*$, has degree $-n$, i.e.,
 $|\alpha * \beta|=|\alpha|+|\beta|-n$. Under the identification
 \eqref{eq:Fl-qt}, the pair-of-pants product turns into the quantum
 product on $\HQ_*(M)$ also denoted by $*$, which is a certain
 deformation of the intersection product on $\H_*(M)$ and has degree
 $-2n$. As a consequence, setting $\varphi=\varphi_1\ldots\varphi_r$,
 we also have the pair-of-pants product
 $$
 \HF_*(\varphi_1)\otimes\ldots \otimes
 \HF_*(\varphi_r)\to\HF_*(\varphi),
 $$
 which agrees with the quantum product, and, in particular,
 $$
  \HF_*\big(\varphi^{k_1}\big)\otimes\ldots\otimes
  \HF_*\big(\varphi^{k_r}\big)
  \to\HF_*\big(\varphi^{k}\big)
$$
where $k_1+\ldots+k_r=k$ and
$$
|\alpha_1|+\ldots +|\alpha_r|-|\alpha_1*\ldots *\alpha_r|
= (r-1)n.
$$

Referring the reader to, e.g., \cite{AS,MS,PSS} for a detailed
treatment of the pair-of-pants product (see also \cite{Se:biased} for
a different and more modern approach) and skipping over some nuances,
we only mention here few relevant points.  There are several ways to
describe the pair-of-pants product on the level of complexes and any
of them is suitable for our purposes as long as it respects the action
filtration. (Thus, for instance, the construction from \cite{PSS} does
not meet this requirement, but the one in \cite{AS} does.)

The product 
$$
  \CF_*(\varphi_1)\otimes\ldots \otimes\CF_*(\varphi_r)\to\CF_*(\varphi)
$$
``counts'' the number of solutions $u\colon \Sigma\to M$ of a suitably
defined Floer equation, where the domain $\Sigma$ is the
$(r+1)$-punctured sphere; see, e.g., \cite{AS, MS}. In other words,
consider capped one-periodic orbits $\bx_i$ of $\varphi_i$ and a
capped one-periodic orbit $\by$ of $H$. Let $\CM$ be the moduli space
of such solutions $u$ ``connecting'' $\bx_1,\ldots ,\bx_r$ to
$\by$. The virtual dimension of $\CM$ is
\begin{equation}
  \label{eq:dim}
\dim\CM=\mu(\bx_1)+\ldots+\mu(x_r)-\mu(\by)
-(r-1)n.
\end{equation}
Assume that this dimension is zero. Then $\by$ enters the product
$\bx_1 *\ldots * \bx_r$ with the coefficient equal to the number of
points (counted with signs if $\F\neq \Z_2$) in the moduli space of
such $u$ ``connecting'' $\bx_1,\ldots ,\bx_r$ to $\by$, provided that
a certain regularity condition is met. This condition, which we will
touch upon in the next section, is satisfied for generic maps
$\varphi_i$.

With or without regularity, we necessarily have
$$
\CA_{H_1}(\bx_1)+\ldots+\CA_{H_r}(\bx_r)-\CA_{H}(\by)=E(u)\geq 0,
$$
where $E(u)$ is the energy of $u$, the Hamiltonian $H_i$ generates
$\varphi_i$ and $H$ generates $\varphi$; see \cite[Eq.\
(3-18)]{AS}. (The choice of $H$ depends on the Hamiltonians $H_i$.) In
particular, $E(u)=0$ if and only if
\begin{equation}
  \label{eq:action}
\CA_{H_1}(\bx_1)+\ldots+\CA_{H_r}(\bx_r)=\CA_{H}(\by).
\end{equation}
In turn, this is the case if and only if $x_1(0)=\ldots=x_r(0)$, the
loop $y$ is the concatenation of the loops $x_i$ and $u$ maps $\Sigma$
onto $y$. Without loss of generality we may assume that the orbits
$x_i$ are constant; see, e.g., \cite[Sect.\ 2.3]{Gi:CC}. Then
\eqref{eq:action} holds if and only $E(u)=0$ and if and only if $u$ is
a constant map.

If the regularity condition is not satisfied, as is often the case for
$\varphi_i=\varphi^{k_i}$, one replaces the maps $\varphi_i$ by their
small perturbations $\varphi'_i$. Since $\varphi_i$ is non-degenerate
there is a one-to-one correspondence between one-periodic orbits of
$\varphi_i$ and $\varphi'_i$ and also a canonical isomorphism
$\CF_*(\varphi_i)\cong\CF_*(\varphi'_i)$. However, this isomorphism
effects the action filtration.

\subsection{Regularity for zero-energy solutions}
\label{sec:trans}
Our goal in this section is to show that zero index, zero energy
pair-of-pants solutions of the Floer equation are automatically
regular. Thus let $x$ be a strongly non-degenerate one-periodic orbit
of $H$ and let $u\colon \Sigma\to M$ be the zero energy solution
asymptotic to $\bx^{k_1}\ldots \bx^{k_r}$ and $\bx^k$ where
$k_1+\ldots k_r=k$. As has been mentioned above, we may assume that
$x$ is a constant one-periodic orbit, and hence $u$ is a constant
solution of the Floer equation mapping $\Sigma$ to $x$.  Denote by
$D\colon \CE^1\to\CE^0$ the linearized Floer operator along $u$. Here
$\CE^1$ is the space of, say, $W^{1,p}$-sections of $u^*TM$ with $p>1$
and $\CE^0$ is the space of $L^p$-sections. The operator $D$ has the
form $\bar{\p}+S$, where $S$ is an automorphism of $u^*TM$, and is
Fredholm due to the non-degeneracy assumption.

\begin{Proposition}
  \label{prop:ker}
  We have $\ker D=0$.
\end{Proposition}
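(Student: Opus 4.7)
The plan is to exploit that $u$ is constant in order to decompose $D$ into a direct sum of scalar Cauchy--Riemann operators on the trivial line bundle over $\Sigma$, and then to rule out nontrivial kernels for each summand by a direct complex-analytic argument on $\CP^1$.

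First, since $u\equiv p:=x(0)$ is constant, the pullback $u^*TM\cong\Sigma\times T_pM$ is trivial and the operator $D=\bar\p+S$ has ``constant coefficients''. By strong non-degeneracy of $x$, after an arbitrarily small perturbation of $H$ near $p$ preserving the constant orbit, we may assume the linearized flow $D\varphi_H^t|_x$ is a path of block-diagonal rotations by angles $2\pi t\lambda_j$ on $n$ $\omega$-orthogonal symplectic 2-planes $E_1,\ldots,E_n\subset T_pM$, with each $\lambda_j\notin\Q$ (so that $k_i\lambda_j$ and $k\lambda_j$ are non-integers for every $i,j$). With respect to the resulting complex identification $T_pM\cong\bigoplus_{j=1}^n\C$, the operator splits as
\begin{equation*}
D=\bigoplus_{j=1}^n D_j,
\end{equation*}
where each $D_j$ is a scalar, constant-coefficient Cauchy--Riemann operator on the trivial complex line bundle $\Sigma\times\C$.

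Next, for each $j$, I would apply a standard gauge transformation on the cylindrical ends that absorbs the asymptotic rotation $t\mapsto e^{2\pi i k_i\lambda_j t}$ at the $i$th positive puncture (respectively $e^{2\pi ik\lambda_j t}$ at the negative puncture) into the local trivialization and extends across the marked points. This identifies $D_j$, restricted to the weighted Sobolev space $\CE^1$, with the ordinary $\bar\p$-operator on a holomorphic line bundle $L_j$ over the compactified domain $\bar\Sigma\cong\CP^1$. Under this identification, elements of $\ker D_j$ correspond bijectively to holomorphic sections of $L_j$ with prescribed vanishing (or pole) orders at the $r+1$ marked points coming from the punctures.

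The heart of the argument is then the computation of $\deg L_j$. Tracking the spectral gap of the asymptotic operator at each puncture -- which at a positive puncture of iteration $k_i$ is determined by the fractional part $\{k_i\lambda_j\}$, and similarly at the negative puncture by $\{k\lambda_j\}$ -- and using the identity $k_1+\ldots+k_r=k$ together with the elementary floor inequality $\sum_i\lfloor k_i\lambda_j\rfloor\leq\lfloor k\lambda_j\rfloor$, one arrives at
\begin{equation*}
\deg L_j\leq\sum_{i=1}^r\lfloor k_i\lambda_j\rfloor-\lfloor k\lambda_j\rfloor-1\leq -1.
\end{equation*}
Since a line bundle of negative degree on $\CP^1$ has no nonzero holomorphic sections, $\ker D_j=0$; summing over $j$ yields $\ker D=0$. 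The main technical obstacle is precisely this degree count: one must correctly match the analytic decay thresholds at the punctures against algebraic vanishing orders at the marked points in the compactified model. This is a bookkeeping exercise parallel to the corresponding calculation in Seidel's treatment of the pair-of-pants product \cite{Se:prod}, and its punchline is the floor inequality displayed above, which in turn relies crucially on $k=k_1+\ldots+k_r$.
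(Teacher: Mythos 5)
Your core degree count is sound: for the diagonal model operator the floor superadditivity $\sum_i\lfloor k_i\lambda_j\rfloor\leq\lfloor k\lambda_j\rfloor$ does force each line bundle to have negative degree on $\CP^1$, and this is indeed how such kernels are computed in explicit situations. The problem is the very first step. Proposition \ref{prop:ker} is a statement about the specific operator $D=\bar\p+S$ determined by the given Hamiltonian $H$ along $x$, and your argument replaces it by a different operator: you perturb $H$ so that the linearized flow becomes a direct sum of linear rotations and so that $S$ becomes complex-linear and diagonal. Injectivity of a Fredholm operator is an \emph{open} condition, not a closed one, so proving $\ker D'=0$ for arbitrarily small perturbations $D'$ of $D$ tells you nothing about $\ker D$ itself -- kernel dimension can jump up in the limit. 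Moreover, the reduction you need is stronger than making $\Phi(1)$ semi-simple: the asymptotic operators at the punctures see the whole path $D\varphi_H^t|_x$ and the full (generally only real-linear) Hessian term $S(z)$, and no small perturbation of $H$ turns an arbitrary such path into the linear rotation model or makes $S$ complex-linear. So as written the proposal establishes the conclusion only for a model operator, not for $D$.

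By contrast, the paper's argument is soft and avoids the model entirely: via the graph construction $D$ becomes a (domain-dependent) Cauchy--Riemann operator with values in the linear target $T_xM\oplus T_xM$, so any $\xi\in\ker D$ is itself a zero-energy holomorphic curve, hence constant, hence zero because it lies globally in $W^{1,p}$. This works for an arbitrary nondegenerate $S$ and requires no perturbation. If you want to salvage your route, you would either need to carry out the Riemann--Roch/weighted-Sobolev analysis for the actual real-linear, non-diagonal $S$ (which is substantially harder and essentially reproduces the general theory), or restate the proposition for the perturbed data and then check that the application in Theorem \ref{thm:general} -- where the paper deliberately keeps $\varphi_{k_i}=\varphi^{k_i}$ near $x$ -- still goes through with the extra perturbation at $x$. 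Neither is done in the proposal, so there is a genuine gap.
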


Proposition \ref{prop:ker} is quite standard and has several
predecessors. A variant of the proposition for Floer cylinders is
established in \cite[Sect.\ 2.3]{Sa} and for closed holomorphic curves
in \cite[Lemma 6.7.6]{MS}. Perhaps the easiest way to prove the
proposition is by adapting the argument from \cite[p.\
971]{Se:prod}. Namely, let us pass to Lagrangian Floer theory by using
the graph construction. Then $D$ turns into the Cauchy--Riemann
operator (with the complex structure in the target space parametrized
by the domain) and a solution $\xi$ of the equation $D\xi=0$ becomes a
zero-energy holomorphic map into $T_xM\oplus T_xM$. Such a curve is
necessarily constant and then $\xi=0$ since $\xi$ is globally in
$W^{1,p}$.

Recall that $u$ is regular when $D$ is onto, i.e., $\coker D=0$, and
that the Fredholm index of $D$ is given by \eqref{eq:dim}:
$$
\dim\ker D-\dim\coker D=\mu\big(\bx^{k_1}\big) +
\ldots+\mu\big(x^{k_r}\big)-\mu\big(\bx^k\big) -(r-1)n.
$$
Thus $\coker D=0$ whenever the index of $D$ is zero and we have proved

\begin{Corollary}
  \label{cor:trans}
  Assume that
  $$
  \mu\big(\bx^{k_1}\big)+\ldots+\mu\big(x^{k_r}\big) -
  \mu\big(\bx^k\big) -(r-1)n=0,
  $$
  i.e., $k_1+\ldots+k_r =k$ is an extremal partition (see Definition
  \ref{def:adm_part}). Then the zero energy solution is automatically
  regular.
\end{Corollary}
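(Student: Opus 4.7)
The plan is to deduce the corollary as an immediate consequence of Proposition \ref{prop:ker} together with the Fredholm index formula \eqref{eq:dim}; no further geometry beyond what has already been established is needed. Regularity of the zero-energy constant solution $u$ means, by definition, that the linearized Floer operator $D\colon \CE^1\to\CE^0$ is surjective, equivalently that $\coker D=0$. Since $x$ is strongly non-degenerate, all the asymptotic orbits $\bx^{k_1},\ldots,\bx^{k_r},\bx^k$ are non-degenerate, so $D=\bar{\p}+S$ is Fredholm and the full machinery relating its Fredholm index to Conley--Zehnder indices applies.

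First I would record that the Fredholm index of $D$ is
\[
\dim\ker D-\dim\coker D
=\mu\big(\bx^{k_1}\big)+\ldots+\mu\big(\bx^{k_r}\big)-\mu\big(\bx^k\big)-(r-1)n,
\]
which is exactly formula \eqref{eq:dim} applied to this configuration of asymptotics. The extremal partition hypothesis is precisely the statement that the right-hand side vanishes, so $\dim\ker D=\dim\coker D$.

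Next I would invoke Proposition \ref{prop:ker}, which has just been established (the substantive input, via the graph construction and the observation that a globally $W^{1,p}$ holomorphic map with zero energy into $T_xM\oplus T_xM$ must vanish). This gives $\ker D=0$, and combined with the index identity above forces $\coker D=0$. That is exactly the regularity of the constant zero-energy solution, completing the proof.

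The only conceptually delicate step is the kernel vanishing, which is handled separately in Proposition \ref{prop:ker}; once that is in hand, the corollary is a one-line index bookkeeping argument, and no genericity of the almost complex structure or of the Hamiltonian is required.
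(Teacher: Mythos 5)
Your proposal is correct and is essentially identical to the paper's argument: the paper also deduces the corollary by combining $\ker D=0$ from Proposition \ref{prop:ker} with the Fredholm index formula \eqref{eq:dim}, so that vanishing of the index (the extremal partition condition) forces $\coker D=0$. No further comment is needed.
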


\section{From extremal partitions to the quantum product}
\label{sec:adm_part-qp}

\subsection{Extremal partitions: the first look}
\label{sec:adm_part}
The notion central to the combinatorial part of the proof of the main
results is that of an extremal partion.

\subsubsection{Definitions and basic facts}
Fix a path $\Phi\in\tSp(2n)$. For the sake of simplicity, we will
assume that $\Phi$ is elliptic and strongly non-degenerate, i.e., the
iterate end-point $\Phi^k(1)$ is non-degenerate for all $k\in\N$.

\begin{Definition}[Extremal Partitions]
  \label{def:adm_part}
  A partition $k_1+\ldots+k_r=k$, $k_i\in\N$, of length $r$ is said to
  be \emph{extremal} (with respect to $\Phi$) if
\begin{equation}
  \label{eq:adm_part}
  \mu\big(\Phi^{k_1}\big) + \ldots +\mu\big(\Phi^{k_r}\big) -
  \mu\big(\Phi^k\big)=(r-1)n.
\end{equation}
\end{Definition}

We will show that the existence of an extremal partition is equivalent
to Condition \ref{cond:A}; see Proposition
\ref{prop:cond-A}. Deferring a detailed discussion of extremal
partitions to Section \ref{sec:adm_part-prop}, we only mention here
two simple facts. Consider the defect
\begin{equation}
  \label{eq:defect}
  D=D(\Phi_1,\ldots,\Phi_r):=\sum\mu\big(\Phi_i\big) -
  \mu\big(\Phi_1\cdot\ldots\cdot\Phi_r\big)
\end{equation}
of the ``Conley--Zehnder quasimorphism'', where we have assumed that
all $\Phi_i$ and all partial products
$\Phi_1\cdot\ldots\cdot\Phi_\ell$, $\ell\leq r$, are
non-degenerate. Then, as is shown in \cite{DG2P},
$$
|D|\leq (r-1)n.
$$
(The non-degeneracy requirement is essential.) We will further discuss
this fact and give a short proof in Section \ref{sec:adm_part-extra};
see Proposition \ref{prop:CZ-qm}. In particular, for any partition
$k_1+\ldots+k_r=k$, $k_i\in\N$, we have
\begin{equation}
\label{eq:defect2}
\mu\big(\Phi^{k_1}\big)+\ldots+\mu\big(\Phi^{k_r}\big)
- \mu\big(\Phi^k\big)\leq
(r-1)n
\end{equation}
as long as the products are non-degenerate; cf.\ \cite[Lemma
5.10]{Se:prod}. Thus extremal partitions maximize the defect; hence,
the term.

Furthermore, $D$ depends only on the end-points
$\Phi_1(1),\ldots,\Phi_r(1)$. Indeed, composing one of the maps
$\Phi_i$ with a loop changes both terms in \eqref{eq:defect} by the
mean index of the loop. In particular, the left-hand side of
\eqref{def:adm_part} is completely determined by $\Phi(1)$ and, of
course, the partition. In other words, whether or not
$k_1+\ldots+k_r=k$ is an extremal partition is a feature of $\Phi(1)$,
but the indices $\mu\big(\Phi^{k_i}\big)$ depend on the path $\Phi$.
For the sake of brevity we set
$\Gamma(\Phi):=\Gamma\big(\Phi(1)\big)$.

\begin{Example}
  \label{ex:ccwise}
  Assume that $\Phi$ is the direct sum of $n$ counterclockwise
  rotations $\exp\big(2\pi\sqrt{-1}\lambda_i t\big)$, where
  $\lambda_i>0$ are small and $t\in [0,\,1]$. Then
  $\mu\big(\Phi^r\big)=n$ as long as $r\max\lambda_i<1$, and
  $1+\ldots+1=r$ is an extremal partition with \eqref{eq:adm_part}
  taking form $rn-(r-1)n=n$.
\end{Example}

\begin{Example}
  Assume that $\Phi$ is toric, i.e., $\dim\Gamma(\Phi)=n$. Then $\Phi$
  admits extremal partitions of arbitrarily large length. We will
  prove this fact in Section \ref{sec:adm_part-prop}, but it is also
  not hard to see this directly as a consequence of Example
  \ref{ex:ccwise}.
\end{Example}

\begin{Example}
  \label{ex:sum}
  Assume that $\Phi$ is the sum of a clockwise rotation
  $\exp\big(-2\pi\sqrt{-1}\lambda t\big)$, $t\in [0,\,1]$, where
  $\lambda>0$, and the counterclockwise rotation in the same
  angle. Then $\mu\big(\Phi^k\big)=0$ for all $k\in\N$ and $\Phi$ does
  not admit extremal partitions.
\end{Example}

\begin{Example}
  \label{ex:clwise}
  Assume that $\Phi$ is the clockwise rotation
  $\exp\big(-2\pi\sqrt{-1}\lambda t \big)$, $t\in [0,\,1]$, where
  $\lambda>0$, or the direct sum of such rotations by the same
  angle. Then $\Phi$ also admits extremal partitions for any
  $\lambda$. This follows, for instance, from Example \ref{ex:ccwise}
  and Proposition \ref{prop:extr-part-prop} or can be verified
  directly.
\end{Example}

\begin{Remark}
  \label{rmk:elliptic}
  The condition that $\Phi$ is elliptic imposed above is in some sense
  redundant: non-elliptic symplectic maps simply do not admit extremal
  partitions. This fact readily follows from Proposition
  \ref{prop:extr-part-prop} and is also easy to prove directly.
\end{Remark}

\subsubsection{Combinatorial results: the existence of extremal partitions}
As one can guess already from \eqref{eq:dim} giving the dimension of
the relevant moduli spaces, extremal partitions are intimately related
to certain products in quantum homology; see Theorem
\ref{thm:general}. However, to conclude from this that the quantum
product is deformed one needs to have additional information about the
classes involved in the product, which in our context is a
combinatorial problem. For instance, the proof of Theorem
\ref{thm:main} hinges on the following result.

\begin{Theorem}[Extremal Partition Theorem]
  \label{thm:adm_part}
  Let $\Phi\in\tSp(2n)$ be elliptic and strongly
  non-degenerate. Assume that for some $r\in \N$ the linear symplectic
  map $\Phi(1)$ satisfies Condition \ref{cond:A} and also, for some
  $N\in\N$, Condition \ref{cond:B1} or Condition \ref{cond:B2}. Then
  there exists an extremal partition $k_1+\ldots+k_r=k$ with respect
  to $\Phi$ such that
\begin{equation}
  \label{eq:adm_part-index}
  \mu\big(\Phi^{k_i}\big)\not\equiv n\mod 2N
  \textrm{ for all } i=1,\ldots, r.
\end{equation}
\end{Theorem}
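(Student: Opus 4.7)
The plan is to prove Theorem~\ref{thm:adm_part} in two stages: first produce an extremal partition from Condition~\ref{cond:A} alone, then refine it using Condition~\ref{cond:B1} or~\ref{cond:B2} to enforce the modular requirement~\eqref{eq:adm_part-index}.

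For the first stage, after an isospectral deformation with semisimple endpoint (which leaves each $\mu(\Phi^\ell)$ and the group $\Gamma$ unchanged up to conjugation), I would identify $\Gamma$ as a closed subgroup of a maximal torus of $\U(n)$ with topological generator $\vtheta=(e^{2\pi\sqrt{-1}\lambda_1},\ldots,e^{2\pi\sqrt{-1}\lambda_n})$. Pick $\vtheta_i\in\Gamma$ as in Condition~\ref{cond:A} with a uniform buffer $\delta>0$ so that $\lambda_{ij}>\delta$ and $\sum_i\lambda_{ij}+r\delta<1$, and, by Kronecker equidistribution inside $\Gamma$, find integers $k_i$ with $k_i\vtheta$ within $\delta/r$ of $\vtheta_i$. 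In this regime the formula $\mu(\Phi^\ell)=n+2\sum_j\lfloor\ell\lambda_j\rfloor$ applies, and the strict inequality $\sum_i\{k_i\lambda_j\}<1$ guaranteed by the buffer forces $\sum_i\lfloor k_i\lambda_j\rfloor=\lfloor k\lambda_j\rfloor$ for $k:=k_1+\ldots+k_r$, which is exactly the extremality identity~\eqref{eq:adm_part}.

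For the second stage, the key observation is that extremality depends only on $\Phi(1)$, so replacing $k_i$ by $k_i+m$ for an integer $m$ with $m\vtheta$ close to $0\in\Gamma$ perturbs each fractional part $\{k_i\lambda_j\}$ by less than the buffer---preserving extremality---while shifting $\mu(\Phi^{k_i})$ by $2\mu_\Gamma(\gamma_m)$, where $\gamma_m$ is the loop in $\Gamma$ traced by $t\mapsto t\vtheta$ on $[0,m]$ and closed up inside a fixed convex neighborhood of $0$. Under Condition~\ref{cond:B2} the required shift is supplied directly by the hypothesized iterate $K$, for which $2\mu_\Gamma(\gamma_K)=\loopp(\Phi^K)\not\equiv 0\bmod 2N$; iterating $K$ by a power coprime to $N$ shrinks $K\vtheta$ into any desired neighborhood of $0$ while preserving non-divisibility. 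Under Condition~\ref{cond:B1}, a similar shift is produced by showing that the realizable classes $\gamma_m\in\pi_1(\Gamma)$ asymptotically span a sublattice $L$ of finite index in $\pi_1(\Gamma)$ (since $\vtheta$ topologically generates $\Gamma$), whence $\mu_\Gamma|_L$ is non-zero modulo $N$ as soon as $\mu_\Gamma$ is. In either case, a single such shift flips the residue of $\mu(\Phi^{k_i})\bmod 2N$ by a fixed non-zero constant, and since the forbidden residue $n\bmod 2N$ is unique, at most one shift per offending index is needed.

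The main obstacle I anticipate is the simultaneous coordination of the $r$ shifts: each shift of some $k_i$ by $m$ also changes the total $k=\sum_i k_i$, hence the right-hand fractional parts $\{k\lambda_j\}$ appearing in~\eqref{eq:adm_part}. This will be handled by performing the shifts sequentially, each with a generator $m\vtheta$ drawn from a nested sequence of neighborhoods shrinking below the current residual buffer; equidistribution of $\{m\vtheta\}$ in $\Gamma$ makes such generators available at every stage. A secondary technical point concerns the non-semisimple case, where the isospectral deformation introduces a correction that shifts the loop part of each iterate uniformly; one verifies that this correction cancels out on both sides of~\eqref{eq:adm_part} and is absorbed into the choice of capping, so affects neither the extremality identity nor the residue condition modulo $2N$.
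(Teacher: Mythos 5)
Your first stage is sound and is essentially the paper's Proposition~\ref{prop:cond-A}: the identity $\sum_i\lfloor k_i\lambda_j\rfloor=\lfloor k\lambda_j\rfloor$ is exactly the condition $\sum_i\{k_i\lambda_j\}<1$, the buffer-plus-Kronecker argument produces the required return times, and the loop-part correction $\ell\,\loopp$ cancels from both sides of the extremality identity. The problems are in the second stage, in both branches. Under Condition~\ref{cond:B1}, the inference ``the realizable classes span a finite-index sublattice $L\subset\pi_1(\Gamma)$, whence $\mu_\Gamma|_L$ is non-zero modulo $N$ as soon as $\mu_\Gamma$ is'' is false: take $\pi_1(\Gamma)=\Z$, $\mu_\Gamma=\mathrm{id}$ and $L=N\Z$; then $\mu_\Gamma$ is non-zero mod $N$ on $\pi_1(\Gamma)$ but vanishes mod $N$ on $L$. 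This is precisely the difficulty the paper's proof is built to avoid: there one realizes the non-divisible class $\gamma$ itself (not an unknown multiple of it) by writing $\gamma=\tpsi^{2N}$, approximating $\tpsi$ by $\phi_0\Phi^s$, and using $\phi_0^{2N}\Phi^{2Ns}\approx\gamma$, so that the error loop $\phi=\phi_0^{2N}$ has $2N\mid\hmu(\phi)$ by construction. Note also that taking $m=2Ns$ simultaneously kills the term $m\,\loopp(\Phi)$ in the true shift $\mu\big(\Phi^{k_i+m}\big)-\mu\big(\Phi^{k_i}\big)=m\,\loopp(\Phi)+\hmu(\gamma_m)$; your shift formula $2\mu_\Gamma(\gamma_m)$ omits this term, which is otherwise uncontrolled mod $2N$ since the loop part of a path is not determined by its endpoint.

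Under Condition~\ref{cond:B2}, the step ``iterating $K$ by a power coprime to $N$ shrinks $K\vtheta$ into any desired neighborhood of $0$ while preserving non-divisibility'' fails on both counts: $pK\vtheta$ is not close to $0$ for a generic $p$ (one must wait for another return time), and the loop part is not multiplicative under iteration, so $\loopp\big(\Phi^{Kp}\big)\equiv p\,\loopp\big(\Phi^{K}\big)\bmod 2N$ holds only when the loops in $\Gamma$ swept out between returns have mean index divisible by $2N$ --- i.e., only after reducing to the case where Condition~\ref{cond:B1} fails, a reduction you never make (and which is legitimate, since otherwise the first branch applies). The paper instead replaces $\Phi$ by $\Phi^{K}$ once and for all, assumes \ref{cond:B1} fails, rescales the extremal partition so that $2N\mid\ell_i$, proves via a geodesic comparison that $\mu\big(\Phi^{k_i}\big)=k_i\,\loopp(\Phi)+d_i$ with $d_i$ fixed mod $2N$, and then takes $k_i\in2N\N+1$ to shift the residue by $\loopp(\Phi)$. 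Your sequential-shift scheme for preserving extremality is fine in spirit (it matches the paper's ``sufficiently accurate approximation'' argument), but without repairing the two residue computations above the theorem is not proved.
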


Note that here, as in Theorem \ref{thm:main}, we could have required
that $N\geq 2$, since Conditions \ref{cond:B1} and \ref{cond:B2} are
never satisfied when $N=1$. It is also worth pointing out again that
in this theorem Conditions \ref{cond:A} and \ref{cond:B1} or
\ref{cond:B2} play very different roles. Condition \ref{cond:A} is
necessary and sufficient to guarantee the existence of an extremal
partition (cf.\ Proposition \ref{prop:cond-A}), while Condition
\ref{cond:B1} or \ref{cond:B2} is used to establish
\eqref{eq:adm_part-index}.

In a similar vein, Theorem \ref{thm:toric} relies on the combinatorics
of extremal partitions in the toric case.

\begin{Theorem}
  \label{thm:adm_part-dense}
  Assume that $\Phi$ is toric, i.e., $\Gamma(\Phi)=\T^n$. Then, for
  every $r\geq 1$, there exists an extremal partition $m+\ldots+m =k$
  of length $r$ (i.e., $r\cdot m=k$) such that
  \begin{equation}
    \label{eq:deg-dense}
    \mu\big(\Phi^{m}\big) \equiv n-2 \mod 2N.
  \end{equation}
\end{Theorem}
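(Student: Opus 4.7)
The plan is to find a single $m \in \N$ such that the equal partition $rm = m + \ldots + m$ is extremal for $\Phi$ and $\mu(\Phi^m) \equiv n - 2 \pmod{2N}$, via a joint equidistribution argument on a toral skew product. First I would decompose $\Phi$: using toricness and an isospectral deformation, we may assume $\Phi$ takes values in the standard maximal torus $\T^n \subset \Sp(2n)$ and write $\Phi = \phi * \Phi_0$, where $\Phi_0(t) = \bigoplus_j \exp(2\pi\sqrt{-1}\,\lambda_j t)$ is a short rotation with angles $\lambda_j \in (0,1)$ and $\phi$ is a loop of Maslov class $M \in \Z$. Toricness is equivalent to $\lambda_1, \ldots, \lambda_n, 1$ being linearly independent over $\Q$; in particular $c := \sum_j \lambda_j$ is irrational. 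Since $\phi$ and $\Phi_0$ commute in the torus, $\Phi^m$ is homotopic rel endpoints to $\phi^m * \Phi_0^m$, and decomposing $\Phi_0^m$ in each coordinate as $\lfloor m\lambda_j \rfloor$ full loops followed by a short rotation by $\{m\lambda_j\}$ gives
\[
\mu(\Phi^m) \;=\; n + 2\bigl(mM + \sum\nolimits_j \lfloor m\lambda_j \rfloor\bigr).
\]

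Adapting the computation in Example~\ref{ex:ccwise}, the extremality of $rm = m + \ldots + m$ reduces to $\{m\lambda_j\} < 1/r$ for every $j$, equivalently $m\vl \in R := [0, 1/r)^n \subset \T^n$, while the target index condition becomes $K(m) := mM + \sum_j \lfloor m\lambda_j \rfloor \equiv -1 \pmod N$. Using the identity $K(m) = m(M+c) - \sum_j \{m\lambda_j\}$, this is equivalent to $m(M+c) \equiv -1 + \sum_j \{m\lambda_j\} \pmod N$. To produce such an $m$, I would analyze the orbit of $(\vl, M+c)$ under translation in the compact group $\T^n \times \R/N\Z$. This orbit lies in the closed subgroup $H := \{(\vec\theta, y) : y \equiv \sum_j \theta_j \pmod 1\}$, which is connected (an $N$-fold covering of $\T^n$) and of dimension $n$. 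Because the identity $c = \sum_j \lambda_j$ accounts for the only $\Q$-linear relation among $\lambda_1, \ldots, \lambda_n, (M+c)/N, 1$ (the toricness assumption precluding further relations), Kronecker's theorem shows the orbit closure is exactly $H$ and Weyl's theorem yields uniform distribution on $H$. The target set $U := \{(\vec\theta, (\sum_j \theta_j - 1) \bmod N) : \vec\theta \in R\} \subset H$ has positive Haar measure, so infinitely many $m$ satisfy both extremality and the residue condition.

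The main obstacle I expect is verifying the orbit-closure calculation carefully: one needs $H$ to be connected so that the sheet corresponding to residue $-1 \pmod N$ is reachable from the identity by iterating, rather than lying in a disconnected component, and one needs to check that no additional $\Q$-linear relation trims the closure down to a proper subgroup of $H$. Both points reduce to careful bookkeeping with the generator $(\vl, M+c)$ and the $\Q$-independence of $\lambda_1, \ldots, \lambda_n, 1$ afforded by toricness.
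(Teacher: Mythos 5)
Your proposal is correct, and the index bookkeeping checks out: with $\Phi\simeq\phi^{M}*\Phi_0$ one indeed gets $\mu\big(\Phi^m\big)=n+2\big(mM+\sum_j\lfloor m\lambda_j\rfloor\big)$, the identity $\lfloor rm\lambda_j\rfloor=r\lfloor m\lambda_j\rfloor+\lfloor r\{m\lambda_j\}\rfloor$ shows that extremality of $m+\ldots+m=rm$ is exactly the condition $\{m\lambda_j\}<1/r$ for all $j$, and toricness is equivalent to $\Q$-independence of $1,\lambda_1,\ldots,\lambda_n$. The subgroup $H\subset\T^n\times\R/N\Z$ is connected (the one-parameter subgroup $t\mapsto(te_1,t)$ already visits all $N$ sheets over $0$), and the character computation confirms that the only characters annihilating $(\vl, M+c)$ are those vanishing on $H$, so the orbit closure is all of $H$ and density in the open sheet over $(0,1/r)^n$ gives the desired $m$. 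Where you differ from the paper is in how the congruence \eqref{eq:deg-dense} is produced. The paper stays inside $\T^n$: it first passes to an iterate whose endpoint consists of $n-1$ tiny positive rotations and one small negative rotation dominating the others, then iterates again so that the negative coordinate winds exactly once, making $\loopp\big(\Phi^m\big)=-2+d$ with $2N\mid d$, while the other coordinates remain short; the congruence is thus engineered geometrically by a single $-2$ winding, and the divisibility of $d$ is arranged by restricting the second iterate to a suitable arithmetic progression. Your approach instead lifts the problem to the $N$-fold covering torus $H$ and invokes Kronecker--Weyl there. The paper's argument is shorter and more visual, but it leaves the control of $d$ modulo $2N$ to a brief remark; your version makes that step completely explicit and mechanical, and it would let you hit any residue realized by the homomorphism $m\mapsto K(m)\bmod N$ on the relevant sheet, not just $-1$. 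Both rest on the same two pillars (density of $\{m\vl\}$ in $\T^n$ for extremality, plus an extra device for the residue), so this is a legitimate alternative proof rather than a divergence in substance.
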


Note that $\Phi$ is then strongly non-degenerate and all eigenvalues
of $\Phi(1)$ are necessarily distinct; cf.\ Example
\ref{ex:base_grp2}. In particular, $\Phi(1)$ is automatically
semi-simple if $\Gamma=\T^n$.

\begin{Remark}
\label{rmk:connected}
An immediate consequence of the proof of Theorem
\ref{thm:adm_part-dense} is that the assertion of the theorem also
holds whenever Conditions \ref{cond:A} and \ref{cond:B2} are satisfied
for the connected component of the identity $\Gamma_0(\Phi)$. (The
same is true for Theorem \ref{thm:main}.) We will use this fact in
Section \ref{sec:adm_part-pf} the proof of Theorem \ref{thm:pr-dim4}.
\end{Remark}

Finally, Theorem \ref{thm:pr-dim4} is also a consequence of the
following combinatorial result.

\begin{Theorem}
  \label{thm:adm_part-dim4}
  Let $\Phi\in\tSp(4)$ be elliptic and strongly non-degenerate, and
  such that $\dim\Gamma(\Phi)=1$.  Assume furthermore that the slope
  $s\neq\pm 1,\, 3,\, 1/3,\, -2,\, -1/2$ when $N=2$ or
  $s\neq -1, \, \pm 1/2, \, \pm 2$ when $N= 3$. Then there exists
  an extremal partition of length $3$ such that
  \begin{equation}
    \label{eq:adm_part-dim4}
  \mu\big(\Phi^{k_i}\big) \not\equiv 2 \mod 2N \textrm{ for
    $i=1,\,2,\,3$.}
  \end{equation}
\end{Theorem}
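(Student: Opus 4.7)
The plan is to exploit the one-dimensional structure of $\Gamma_0$ to parametrize the length-three extremal partitions explicitly, and then to analyze the arithmetic of $\mu(\Phi^{k_i})\bmod 2N$ in a finite case check over the slope~$s$.

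First, I would set up coordinates on $\Gamma_0$. Since $\dim\Gamma=1$, the connected component $\Gamma_0\subset\T^2$ is the circle $\{(s_2 t,-s_1 t)\bmod 1:t\in\R\}$, so every triple of points $\vtheta_1,\vtheta_2,\vtheta_3\in\Gamma$ satisfying Condition~\ref{cond:A} for $r=3$ is determined by three real parameters $t_1,t_2,t_3$ subject to the two linear inequalities of \eqref{eq:cond_A}. By the monothetic structure of $\Gamma$ recalled in Section~\ref{sec:base_grp}, each such triple is approximated by $(k_1\vtheta,k_2\vtheta,k_3\vtheta)$ for a topological generator $\vtheta$ of~$\Gamma$, and Proposition~\ref{prop:cond-A} then produces from this data an extremal partition of length three. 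Remark~\ref{rmk:Cond-A-2D} guarantees that for all but finitely many slopes there is in fact a genuine two-parameter family of such triples, so one can hope to impose extra arithmetic conditions on $k_i$ at the cost of discarding a short list of exceptional slopes.

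Second, I would translate \eqref{eq:adm_part-dim4} into arithmetic. Using the description of $\mu_\Gamma$ following Definition~\ref{def:cond_B1}, which in our coordinates measures the number of times the fractional parts of $ms_2 t$ and $-ms_1 t$ cross zero, and adding the loop contribution $\loopp(\Phi^m)$ from Section~\ref{sec:loop}, I would write $\mu(\Phi^{k_i})\bmod 2N$ as an explicit affine function of $(k_i s_1,k_i s_2)$ modulo $2N$, with a constant term depending on the loop part. The condition $\mu(\Phi^{k_i})\not\equiv 2\bmod 2N$ then becomes a concrete congruence constraint to be imposed simultaneously with the partition equation $k_1+k_2+k_3=k$ and with the extremal condition coming from the triple $(\vtheta_1,\vtheta_2,\vtheta_3)$ lying in the simplex of Condition~\ref{cond:A}.

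Third, I would carry out the finite case check. For $N=2$ the residues live in $\Z/4\Z$; four of the six bad slopes, namely $\pm 1,3,1/3$, already satisfy $s_1+s_2\equiv 0\bmod 2$ and hence violate Condition~\ref{cond:B1}, which by the excerpt's own remark accounts for their obstruction; the remaining two, $-2$ and $-1/2$, must be checked by inspecting the very small extremal partitions (essentially $(1,1,k)$ and $(1,k_2,k_3)$ with small $k_2,k_3$) and observing that every admissible triple has at least one index $\equiv 2\bmod 4$ due to the symmetry of the slope. For $N=3$ a parallel but slightly longer enumeration handles $-1,\pm 1/2,\pm 2$, and for every other slope satisfying Condition~\ref{cond:A} one uses the abundance of Condition~\ref{cond:A}-triples to produce a good partition. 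The main obstacle is the bookkeeping: one must organize the enumeration over small partitions in such a way that no sporadic bad slope escapes verification, and one must check that the exceptional slopes really do admit no length-three extremal partition satisfying \eqref{eq:adm_part-dim4}. The enumeration strategy mirrors that of Theorem~\ref{thm:adm_part} in Section~\ref{sec:adm_part-study}, but in the present setting $n=2$ and $r=3$ keep the combinatorics small enough to complete explicitly.
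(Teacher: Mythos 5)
Your overall strategy --- reduce to combinatorics on the circle $\Gamma_0\subset\T^2$ and do a finite case check over the slope --- points in the right direction, but two essential ideas are missing, and without them the plan does not close. First, the theorem does not assume Conditions \ref{cond:B1} or \ref{cond:B2}, so $\mu(\Phi^{k_i})\bmod 2N$ is \emph{not} determined by the slope alone: it also depends on $\loopp(\Phi)$ and on the value of $\mu_\Gamma$, which can be arbitrary. Your step two, writing the index as ``an explicit affine function of $(k_is_1,k_is_2)$ modulo $2N$,'' therefore cannot be carried out as stated, and a case check over slopes cannot succeed. The paper's proof first disposes of the case where \ref{cond:B1} or \ref{cond:B2} holds by invoking Theorem \ref{thm:adm_part} together with Remark \ref{rmk:connected}, and only then assumes both fail; that assumption forces $2N\mid\loopp(\Phi^k)$ and $2N\mid\hmu(\gamma)$ for every loop $\gamma$ in $\Gamma$, which is exactly what makes the residue of the index computable from the geometry of $\Gamma$. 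Second, the decisive lemma is that, under this reduction, $\mu(\Phi^k)\bmod 2N$ depends only on which connected component $L_q$ of the preimage $L$ of $\Gamma$ in the fundamental domain $C=[0,1]^2$ contains $\Phi^k(1)$, and that the residue \emph{changes} between consecutive components. This converts \eqref{eq:adm_part-dim4} into counting components of $L\cap\Pi_3$: two or more components guarantee one with index $\not\equiv 2$. Your proposal has no substitute for this mechanism.

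Your accounting of the excluded slopes is also off. For $s=-1,\,-2,\,-1/2$ the exclusion is not a matter of ``every admissible triple having an index $\equiv 2\bmod 4$'': by Corollary \ref{cor:cond-A2} one has $\Gamma\cap\Pi_3=\emptyset$ for these slopes, so Condition \ref{cond:A} fails and there are \emph{no} length-three extremal partitions at all (Proposition \ref{prop:cond-A}). Conversely, the positive slopes $1,\,3,\,1/3$ (for $N=2$) are not excluded ``because they violate Condition \ref{cond:B1}'' --- failure of \ref{cond:B1} is precisely the generic situation the theorem is meant to handle after the reduction above. They are excluded because for them $L\cap\Pi_3$ has only one connected component, so the component-counting argument cannot force the index away from $2\bmod 4$. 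Fixing the proposal requires inserting the reduction to failing \ref{cond:B1}/\ref{cond:B2}, proving the component-wise constancy and alternation of the index mod $2N$, and then redoing the slope analysis as a count of components of $L\cap\Pi_3$ (with the extra small list $s=-1/4,\,-4,\,-2/5,\,-5/2$ handled separately when $N=3$).
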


\begin{Remark}
  This theorem is more precise than Theorem \ref{thm:adm_part} and it
  gives essentially a necessary and sufficient condition in dimension
  four. Namely, assume that $\Gamma$ is connected and its slope is
  ``black-listed'' in Theorem \ref{thm:adm_part-dim4}. Then there
  exists $\Phi\in\tSp(4)$ such that $\Gamma(\Phi)=\Gamma$ and there
  are no extremal partitions satisfying
  \eqref{eq:adm_part-dim4}. However, in the setting of the theorem,
  $\Phi$ still satisfies Condition \ref{cond:A}, and if Condition
  \ref{cond:B2} holds the desired partitions exist.
\end{Remark}

The conditions of these theorems are satisfied for most (but not all)
of strongly non-degenerate, elliptic $\Phi\in \tSp(2n)$.

\subsection{Combinatorics of extremal partitions and the quantum
  product}
\label{sec:mainthm-pf}
In this section we establish the main result of the paper, Theorem
\ref{thm:main}, as an easy consequence of Theorem \ref{thm:adm_part}
and Corollary \ref{cor:trans}, and also Theorems \ref{thm:toric} and
\ref{thm:pr-dim4}. The proofs of all three theorems follow the same
path and can be rephrased as a general argument reducing the problem
to a combinatorial question.

Recall that since $\varphi$ is a pseudo-rotation, for every $k\in\N$
we have canonical identifications \eqref{eq:CF=HF=HQ}:
$$
\CF_*\big(\varphi^k\big)\cong\HF_*\big(\varphi^k\big)\cong\HQ_*(M)[-n],
$$
where we view $\varphi$ as an element of $\tHam(M)$ rather than
$\Ham(M)$.

\begin{Theorem}
  \label{thm:general}
  Let $\bx$ be a capped one-periodic orbit of a pseudo-rotation
  $\varphi$, and let $k_1+\ldots+ k_r=k$ be an extremal partition of
  length $r$ with respect to $\Phi:=D\varphi^t|_{\bx}$. Using
  \eqref{eq:CF=HF=HQ}, set $\alpha_i=[\bx^{k_i}]\in\HQ_*(M)$.  Then
  $|\alpha_i|=n+\mu\big(\Phi^{k_i}\big)$ and \eqref{eq:prod} holds:
$$
\alpha_1*\ldots*\alpha_r\neq 0.
$$
\end{Theorem}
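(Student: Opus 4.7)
The plan is to compute the chain-level pair-of-pants product directly, exhibit a single ``diagonal'' zero-energy contribution that dominates, and use the vanishing of the Floer differential for a pseudo-rotation to conclude that this contribution survives to homology. The degree statement is immediate from the grading conventions of Section \ref{sec:prelim}: under $\CF_*(\varphi^{k_i})\cong\HQ_*(M)[-n]$, the generator $\bx^{k_i}$, whose capping is the $k_i$-fold iterated capping of $\bx$ and whose Conley--Zehnder index is $\mu(\Phi^{k_i})$, corresponds to a quantum class of degree $n+\mu(\Phi^{k_i})$.

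For the product, I would pick Hamiltonians $H_i$ generating $\varphi^{k_i}$ and $H$ generating $\varphi^k$ compatibly with the pair-of-pants setup of Section \ref{sec:pair-of-pants}, and arrange, as permitted there, that the orbits $x^{k_i}$ are all constant at a single point $x$. With this choice the natural iterated capping of $\bx^k$ satisfies
$$
\CA_H(\bx^k)=\CA_{H_1}(\bx^{k_1})+\ldots+\CA_{H_r}(\bx^{k_r}).
$$
Let $\beta\in\CF_*(\varphi^k)$ denote the chain-level product $\bx^{k_1}*\ldots*\bx^{k_r}$. For any capped orbit $\by$ entering $\beta$ with nonzero coefficient, the action inequality of Section \ref{sec:pair-of-pants} gives
$$
\CA_H(\by)\leq\sum_{i}\CA_{H_i}(\bx^{k_i})=\CA_H(\bx^k),
$$
with equality exactly when the contributing Floer solution has zero energy; and, as recalled there, the only such solution is the constant map at $x$, whose asymptotic target is $\bx^k$ itself with the iterated capping. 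Thus $\bx^k$ is the unique capped orbit of top action occurring in $\beta$.

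The extremality hypothesis \eqref{eq:adm_part} is precisely the statement that the Fredholm index of the linearized operator at this constant solution is zero; by Corollary \ref{cor:trans} the solution is then automatically regular. Consequently it contributes a unit $\pm 1\in\F$ to the coefficient of $\bx^k$ in $\beta$, and we may write
$$
\beta \;=\; \pm \bx^k \;+\; (\text{terms with strictly smaller action}).
$$
Since $\varphi^k$ is a pseudo-rotation, the differential on $\CF_*(\varphi^k)$ vanishes identically, so $\CF_*(\varphi^k)=\HF_*(\varphi^k)$ and $\beta$ represents $\alpha_1*\ldots*\alpha_r$ in Floer, and hence in quantum, homology via \eqref{eq:Fl-qt}. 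The unit leading coefficient of $\bx^k$ in $\beta$ then shows $\beta\neq 0$, proving \eqref{eq:prod}.

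The main obstacle I anticipate is technical rather than conceptual: namely, choosing Floer data for which transversality holds for all positive-energy strata without disturbing the regularity of the distinguished constant solution supplied by Corollary \ref{cor:trans}, and verifying that the action identity above is strictly preserved by the perturbations used to set up the product at the chain level. Both are standard once one works with an action-filtered model of the pair-of-pants product such as \cite{AS}; once they are in place, the non-triviality of the product follows from the action filtration and the vanishing of the differential alone.
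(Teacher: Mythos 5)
Your argument is correct and follows essentially the same route as the paper's proof: reduce to the chain-level pair-of-pants product, isolate the constant zero-energy solution as the unique contribution at top action, invoke Corollary \ref{cor:trans} (via extremality) for its regularity, and use the vanishing differential to pass to homology. The perturbation issue you flag at the end is exactly the point the paper addresses by choosing the perturbations to agree with $\varphi^{k_i}$ near $x$ (and notes in a remark is not strictly necessary, since the regular constant curve persists as a small-energy curve under perturbation).
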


Note that in the setting of this theorem $x$ is automatically
elliptic; see Remark \ref{rmk:elliptic}. Theorems \ref{thm:main} and
\ref{thm:pr-dim4} immediately follow from this general result and
Theorems \ref{thm:adm_part} and \ref{thm:adm_part-dim4}, combined with
the observation that all iterated indices $\mu\big(\Phi^k\big)$ have
the same parity when $\Phi$ is elliptic.

\begin{proof}[Proof of Theorem \ref{thm:general}]
  The argument is based on Example \ref{ex:irr-rot}. Clearly
  $|\alpha_i|=n+\mu\big(\Phi^{k_i}\big)$. Thus we only need to verify
  \eqref{eq:prod}, i.e., that
\begin{equation}
  \label{eq:x-prod}
[\bx^{k_1}]*\ldots *[\bx^{k_r}]\neq 0,
\end{equation}
where we have now identified the quantum product with the
pair-of-pants product
$$
\HF_*\big(\varphi^{k_1}\big)\otimes\ldots\otimes
\HF_*\big(\varphi^{k_r}\big) \to \HF_*\big(\varphi^{k}\big).
$$

Consider small non-degenerate perturbations $\varphi_{k_i}$ of
$\varphi^{k_i}$ such that on the level of Floer complexes the
regularity condition is satisfied for the pair-of-pants product
$$
\CF_*\big(\varphi_{k_1}\big)\otimes\ldots\otimes
\CF_*\big(\varphi_{k_r}\big) \to \CF_*\big(\varphi_{k}\big),
$$
where $\varphi_k:=\varphi_{k_r}\circ\ldots\circ \varphi_{k_1}$. Note
that $\varphi_k$ is also a small perturbation of $\varphi^k$, and we
have have canonical isomorphisms of the Floer complexes
$$
\CF_*\big(\varphi_{k_i}\big)=\CF_*\big(\varphi^{k_i}\big)
\textrm{ and }
\CF_*\big(\varphi_k\big)=\CF_*\big(\varphi^k\big).
$$
Furthermore, by Corollary \ref{cor:trans}, we can make these
perturbations such that $\varphi_{k_i}=\varphi^{k_i}$ near $x$ and, as
a consequence, $\varphi_k=\varphi^k$ on a small neighborhood of
$x$. (Here it is convenient to assume that $x$ is a constant
one-periodic orbit -- this can always be achieved by composing
$\varphi$ with a contractible loop; see, e.g., \cite[Sect.\
2.3]{Gi:CC}.) Thus $\bx^{k_i}$ is still a capped one-periodic orbit of
$\varphi_{k_i}$ and $\bx^k$ is a capped periodic orbit of
$\varphi_k$. Let us redenote these orbits as $\bx_{k_i}$ and $\bx_k$,
respectively.

The only zero-energy pair-or-pants curves are constant; see Section
\ref{sec:pair-of-pants}. Thus the constant curve is the only curve
from $(\bx_{k_1},\ldots,\bx_{k_r})$ to $\bx_k$. Furthermore, consider
the modular space of such curves. This modular space has virtual
dimension zero and the constant curve from
$(\bx_{k_1},\ldots,\bx_{k_r})$ to $\bx_k$ is regular by Corollary
\ref{cor:trans}.  Therefore,
\begin{equation}
  \label{eq:x-prod2}
\bx_{k_1}*\ldots *\bx_{k_r} =\bx_k+\ldots,
\end{equation}
where the dots stand for capped periodic orbits of $\varphi_k$ with
action strictly smaller than the action of $\bx_k$. As a consequence,
$$
[\bx^{k_1}]*\ldots *[\bx^{k_r}]=[\bx^k]+\ldots,
$$
where the dots represent again some cohomology classes generated by
the orbits with action strictly smaller than the action of
$\bx^k$. Hence, the right-hand side is non-zero. This proves
\eqref{eq:x-prod} and concludes the proof of the theorem.
\end{proof}

\begin{Remark}
  Choosing the perturbations $\varphi_{k_i}$ equal to $\varphi^{k_i}$
  near $x$ is convenient but not really necessary. Since the constant
  pair-of-pants curve from $(\bx^{k_1},\ldots,\bx^{k_r})$ to $\bx^k$
  is regular, it will persist under a small perturbation turning into
  one non-constant small energy curve. This is enough to separate the
  action of $\bx_k$ from the actions of other periodic orbits on the
  right-hand side of the product \eqref{eq:x-prod2}; cf.\ \cite[Prop.\
  2.2]{GG:Rev}.
\end{Remark}

\begin{proof}[On the proof of Theorem \ref{thm:toric}]
  The result easily follows from Theorem \ref{thm:general}. We only
  need to make sure that in this case we can take the product of $r$
  equal elements of degree $2n-2$.  Let $x$ be a one-periodic orbit of
  $\varphi$ such that $\dim\Gamma(x)=n$. Let $\Phi=D\varphi^t|_{\bx}$,
  where we have used an arbitrary capping of $x$, and let $m$ be as in
  Theorem \ref{thm:adm_part-dense}. Then $m+\ldots+m=rm$ is an
  extremal partition for $\Phi$. Although in general the degree of
  $[\bx^m]$ need not be equal to $2n-2$, we have $\mu(x^m)= n-2$ in
  $\Z_{2N}$ by \eqref{eq:deg-dense}. (Recall that the Conley--Zehnder
  index of an un-capped orbit is well-defined as an element of
  $\Z_{2N}$.)  Denote by $\by$ the orbit $x^m$ capped so that
  $\mu(\by)=n-2$. Then $[\by]^r=f\cdot [\bx^m]^r$ for some $f\neq 0$
  in the Novikov ring, and $[\bx^m]^r\neq 0$ by Theorem
  \ref{thm:general}.  It follows that $\alpha^r\neq 0$ and
  $|\alpha|=2n-2$, where $\alpha=[\by]$.
\end{proof}

\section{Study of extremal partitions}
\label{sec:adm_part-study}

\subsection{General properties}
\label{sec:adm_part-prop}
The notion of an extremal partition is certainly interesting by
itself. In this section we establish some of their general properties,
recalling some of the facts already mentioned in Section
\ref{sec:adm_part} and going slightly father than is strictly speaking
necessary for applications to our main results. We start with the
following general result concerning the defect of ``the
Conley--Zehnder quasimorphism'':

\begin{Proposition}[Cor.\ 3.5 in \cite{DG2P}]
\label{prop:CZ-qm}
For any two non-degenerate elements $\Phi$ and $\Psi$ of $\tSp(2n)$,
we have
$$
\big|\mu(\Psi\Phi)-\mu(\Psi)-\mu(\Phi)\big|\leq n.
$$
\end{Proposition}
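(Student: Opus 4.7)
The plan is to relate the Conley--Zehnder index $\mu$ to the mean index $\hmu\colon\tSp(2n)\to\R$, which behaves more tractably under composition, and then to control the discrepancy.

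I would first invoke the standard pointwise bound $|\mu(\Phi)-\hmu(\Phi)|\leq n$ for non-degenerate $\Phi\in\tSp(2n)$: by definition, $\mu(\Phi)$ is the unique integer in a window of size $2n$ around $\hmu(\Phi)$ determined by the Krein/parity convention for the Conley--Zehnder index, and this half-window bound extends by continuity of $\hmu$ to the whole non-degenerate locus.

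Next, I would establish additivity of $\hmu$: namely, $\hmu(\Psi\Phi)=\hmu(\Psi)+\hmu(\Phi)$. This follows by retracting $\tSp(2n)$ onto the maximal compact $\widetilde{U(n)}$; under this retraction $\hmu$ is identified (up to a universal factor) with the canonical $\R$-valued lift of $\det\colon U(n)\to S^1$ along the universal cover $\R\to S^1$, and the lift of a Lie group homomorphism is itself a homomorphism.

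Combining these two facts by the triangle inequality yields immediately the weaker bound $|\mu(\Psi\Phi)-\mu(\Psi)-\mu(\Phi)|\leq 3n$. The main obstacle is sharpening this to $n$. The three rounding errors $(\mu-\hmu)(\cdot)$ in the triangle cannot be chosen independently: they are governed by the Krein signatures of the eigenvalues of $\Phi(1)$, $\Psi(1)$, and $\Psi(1)\Phi(1)$ near the resonance locus, and these signatures obey additivity relations compatible with the composition $\Psi\Phi$. I expect the cleanest route is to realize $\mu(\Psi\Phi)-\mu(\Psi)-\mu(\Phi)$ as the signature of a symplectic intersection form, in the spirit of Meyer's signature cocycle or equivalently via Wall's non-additivity of signature, defined on a correction subspace of dimension at most $n$; the signature bound $|\sigma|\leq \dim$ then yields the sharp inequality. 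The technical difficulty is the setup: one must identify the right model of $\mu$ as a half-signature so that the composition $\Psi\Phi$ corresponds geometrically to a gluing of quadratic forms to which Wall's formula applies cleanly.
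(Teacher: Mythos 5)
There are two genuine problems here. First, your claimed additivity $\hmu(\Psi\Phi)=\hmu(\Psi)+\hmu(\Phi)$ is false: the mean index is only a homogeneous quasimorphism on $\tSp(2n)$, not a homomorphism. The retraction of $\tSp(2n)$ onto $\widetilde{\U(n)}$ is not a group homomorphism, so the homomorphism property of the lifted determinant does not transport back to the whole group; indeed $\tSp(2n)$ is perfect and so admits no nontrivial homomorphism to $\R$, and Remark \ref{rmk:Maslov-qm} of the paper records that the mean index has defect as large as $4n$. This invalidates even your preliminary $3n$ bound as you derive it (one can still obtain a crude bound by other means, but not by this argument). Second, and more seriously, the sharpening to $n$ --- which is the entire content of the proposition --- is only announced as a plan (``I expect the cleanest route is...'') and never executed; the quasimorphism property by itself cannot produce the sharp constant.

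That said, you are pointing in the right direction: the paper's proof does realize the defect as a signature. Concretely, it rewrites $D$ as a H\"ormander index $s(L_1,L_2;\Lambda(0),\Lambda(1))$ of graphs in $\bar\R^{2n}\times\R^{2n}$, chooses a Lagrangian complement $N$ of the diagonal transverse to the two relevant graphs, writes those graphs as graphs of symmetric matrices $A$ and $B$ over the diagonal, and applies the Robbin--Salamon formulas to get $D=\tfrac12\big[\sgn(B)-\sgn(A)-\sgn(B-A)\big]=\tfrac12\sgn\big(B^{-1}-A^{-1}\big)$, whence $|D|\leq n$. Note that your dimension count is off: the correction form lives on a space of dimension up to $2n$, and it is the factor $\tfrac12$ in front of the signature, not a bound of $n$ on the dimension of a ``correction subspace,'' that yields the constant $n$. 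To turn your sketch into a proof you would need to set up and verify the Wall/Meyer-type formula in this graph model, which is precisely the work the paper's argument does via \cite{RS-index}.
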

This upper bound is sharp and the non-degeneracy requirement is
essential. The proposition in particular implies a sharp upper bound
for the defect of several Conley--Zehnder (or Maslov-) type
quasimorphisms on $\tSp(2n)$; see Remark \ref{rmk:Maslov-qm}. For the
sake of completeness, we give a short and elementary proof of the
proposition in Section \ref{sec:adm_part-extra}. (Note also that the
regularity arguments from Section \ref{sec:trans} can be turned into
an analytical proof of Proposition \ref{prop:CZ-qm}.)

As a consequence, for any partition $k_1+\ldots+k_r=k$, $k_i\in\N$, we
have \eqref{eq:defect2}, i.e.,
$$
\mu\big(\Phi^{k_1}\big)+\ldots+\mu\big(\Phi^{k_r}\big) -
\mu\big(\Phi^k\big)\leq (r-1)n,
$$
as long as the products are non-degenerate. (Another way to state this
fact is that the function $k\mapsto \mu\big(\Phi^k\big)-n$ is
sub-additive.) Thus extremal partitions maximize the left-hand side of
this inequality. This upper bound is again sharp and non-degeneracy is
essential.

\begin{Proposition}[Properties of Extremal Partitions]
  \label{prop:extr-part-prop}
  Let $\Phi\in\tSp(2n)$ and $\Psi\in\tSp(2n')$.
\begin{itemize}
\item[\rm{(i)}] A partition $k_1+\ldots+k_r=k$ is extremal for $\Phi$
  if and only if it is extremal for $\phi\Phi$ for any loop $\phi$ in
  $\Sp(2n)$. Thus the property to be extremal depends only on
  $\Phi(1)\in\Sp(2n)$.
\item[\rm{(ii)}] A partition $k_1+\ldots+k_r=k$ is extremal for
  $\Phi^m$ if and only if $mk_1+\ldots+mk_r=mk$ is extremal for
  $\Phi$.
\item[\rm{(iii)}] A partition $k_1+\ldots+k_r=k$ is extremal for
  $\Phi\oplus\Psi$ if and only if it is simultaneously extremal for
  $\Phi$ and $\Psi$.
\item[\rm{(iv)}] Assume that $k_1+\ldots+k_r=k$ and
  $\ell_1+\ldots+\ell_s=k_1$ are extremal partions for $\Phi$. Then
  $\ell_1+\ldots+\ell_s+k_2+\ldots+k_r=k$ is also an extremal
  partition for $\Phi$. Conversely, assume that $k_1+\ldots+k_r=k$ is
  an extremal partition. Then for any $1\leq s\leq r$, the sum
  $k_1+\ldots+k_s=:m$ of the first $s$ terms and the sum
  $m+k_{s+1}+\ldots+k_r=k$ are also extremal partitions.
 \end{itemize}
\end{Proposition}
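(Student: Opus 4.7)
The unifying principle is that a partition is extremal precisely when the defect of the Conley--Zehnder quasimorphism, evaluated on the corresponding iterates, attains the maximal value $(r-1)n$ allowed by Proposition \ref{prop:CZ-qm}. Every item will be reduced to elementary algebraic manipulations combined with this upper bound. The non-degeneracy of all iterates of $\Phi$ ensures that all Conley--Zehnder indices appearing below are well-defined integers.

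For (i), the key input is that for a loop $\phi$ in $\Sp(2n)$ based at the identity, the Conley--Zehnder index on $\tSp(2n)$ satisfies $\mu\bigl((\phi\Phi)^j\bigr)=\mu(\Phi^j)+j\cdot\mu(\phi)$, since iterating the concatenation picks up one copy of $\mu(\phi)$ per loop. Plugging this into the defect
\[
\sum_{i=1}^{r}\mu\bigl((\phi\Phi)^{k_i}\bigr)-\mu\bigl((\phi\Phi)^{k}\bigr)
=\sum_{i=1}^{r}\mu\bigl(\Phi^{k_i}\bigr)-\mu\bigl(\Phi^{k}\bigr)
+\mu(\phi)\Bigl(\sum_{i=1}^{r}k_{i}-k\Bigr),
\]
the loop contribution cancels because $\sum k_i=k$, proving the equivalence; any two paths with common endpoint differ by such a loop, giving the final assertion. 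Item (ii) is immediate from $(\Phi^{m})^{k_i}=\Phi^{m k_i}$, which rewrites the defect for $\Phi^m$ with partition $(k_1,\ldots,k_r)$ as the defect for $\Phi$ with partition $(mk_1,\ldots,mk_r)$.

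For (iii), I will exploit additivity of the Conley--Zehnder index under direct sums: $\mu\bigl((\Phi\oplus\Psi)^{j}\bigr)=\mu(\Phi^{j})+\mu(\Psi^{j})$. Then the defect $D_{\Phi\oplus\Psi}$ for the partition decomposes as $D_{\Phi}+D_{\Psi}$, and Proposition \ref{prop:CZ-qm} bounds these summands by $(r-1)n$ and $(r-1)n'$ respectively. Equality $D_{\Phi\oplus\Psi}=(r-1)(n+n')$ therefore forces equality in each summand, which is exactly simultaneous extremality. The only subtle point is that individual ellipticity is needed so both $\Phi^{j}$ and $\Psi^{j}$ remain non-degenerate, which is automatic from strong non-degeneracy of $\Phi\oplus\Psi$.

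Part (iv) uses the same bookkeeping in a slightly different form. For the first implication, simply add the two extremality equalities: the middle term $\mu(\Phi^{k_1})$ cancels, and the right-hand side becomes $(r-1)n+(s-1)n=\bigl((r+s-1)-1\bigr)n$, which is the required defect for the concatenated partition of length $r+s-1$. For the converse, set $m=k_1+\ldots+k_s$ and write
\[
A=\sum_{i=1}^{s}\mu\bigl(\Phi^{k_i}\bigr)-\mu(\Phi^{m}),\qquad
B=\mu(\Phi^{m})+\sum_{i=s+1}^{r}\mu\bigl(\Phi^{k_i}\bigr)-\mu(\Phi^{k}).
\]
Then $A+B=(r-1)n$ by hypothesis, while $A\leq(s-1)n$ and $B\leq(r-s)n$ by Proposition \ref{prop:CZ-qm}. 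Since $(s-1)n+(r-s)n=(r-1)n$, both inequalities are equalities, giving extremality of each sub-partition. The mildly delicate point throughout is remembering that ``extremal'' means attainment of the maximum in the Proposition \ref{prop:CZ-qm} bound, so that the splitting argument in (iv) works solely because of the tightness of two a priori independent inequalities; this is the one place where one should be careful, but it is not a serious obstacle.
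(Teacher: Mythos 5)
Your proposal is correct and follows essentially the same route as the paper: additivity/centrality of loops for (i), the identity $(\Phi^m)^{k_i}=\Phi^{mk_i}$ for (ii), additivity of the index under direct sums plus the sharp bound of Proposition \ref{prop:CZ-qm} for (iii), and the "two inequalities summing to an equality must both be equalities" argument for (iv). The only cosmetic remark is that in (i) the loop contribution should be written as the mean index $\hmu(\phi)$ (a loop is degenerate at its endpoint, so $\mu(\phi)$ is not defined in the non-degenerate sense), but this does not affect the argument.
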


\begin{proof} 
  Recall that $k_1+\ldots+k_r=k$ is an extremal partition for $\Phi$
  if \eqref{eq:adm_part} holds:
$$
\mu\big(\Phi^{k_1}\big)+\ldots+\mu\big(\Phi^{k_r}\big) -
\mu\big(\Phi^k\big)=(r-1)n.
$$
Replacing $\Phi$ by $\phi\Phi$ adds $k_i\hmu(\phi)$ and $k\hmu(\phi)$
to the terms on the left and hence does not effect the sum. This
proves (i). Assertion (ii) is obvious from the definition.

By additivity of the Conley--Zehnder index, an extremal partition of
$\Phi$ and $\Psi$ is also an extremal partition for $\Phi\oplus
\Psi$. Conversely, if a partion is not extremal for $\Phi$ or/and
$\Psi$, \eqref{eq:adm_part} becomes a strict inequality by Proposition
\ref{prop:CZ-qm}. Adding up these inequalities for $\Phi$ and $\Psi$
we obtain a strict inequality for $\Phi\oplus \Psi$. This concludes
the proof of (iii).

In one direction, Assertion (iv) is also clear from the definition. To
prove the converse, consider an extremal partition
$k_1+\ldots+k_r=k$. For $s<r$, set $m=k_1+\ldots+k_s$. We need to show
that the partitians $k_1+\ldots+k_s=m$ and $m+k_{s+1}+\ldots+k_r=k$
are also extremal. Assume not. Then, by Proposition \ref{prop:CZ-qm},
we have
$$
\mu\big(\Phi^{k_1}\big)+\ldots+\mu\big(\Phi^{k_s}\big)-
\mu\big(\Phi^m\big)\leq(s-1)n
$$
and
$$
\mu\big(\Phi^{m}\big)+\mu\big(\Phi^{k_{s+1}}\big)+
\ldots+\mu\big(\Phi^{k_r}\big)-\mu\big(\Phi^k\big)\leq(r-s)n,
$$
where at least one of the inequalities is strict by the
assumption. Combining these inequalities, we conclude that
\eqref{eq:adm_part} is also strict and thus the original partition is
not extremal.
\end{proof}

The role of Condition \ref{cond:A} in our method is clarified by the
next result.

\begin{Proposition}
  \label{prop:cond-A}
  An elliptic element $\Phi\in\tSp(2n)$ admits an extremal partition
  of length $r$ if and only if $\Gamma(\Phi)$ satisfies Condition
  \ref{cond:A}.
\end{Proposition}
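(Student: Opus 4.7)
The plan is to reduce the extremality condition for a partition to an explicit arithmetic inequality on the eigen-angles of $\Phi(1)$, and then to identify this inequality with Condition~\ref{cond:A} via the density of the iterates $\{k\vtheta\mid k\in\N\}$ in $\Gamma$.

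First I would bring $\Phi$ to a direct sum of short rotation paths. Using the isospectral deformation of Section~\ref{sec:base_grp}, which preserves each iterated Conley--Zehnder index $\mu(\Phi^k)$ because these indices are integer-valued continuous functions on the strongly non-degenerate stratum, I may assume $\Phi(1)=\tilde P$ is semi-simple. In suitable symplectic coordinates $\tilde P$ splits as a direct sum of $n$ planar rotations by angles $2\pi\lambda_j$, with $\lambda_j\in(0,1)$. By Proposition~\ref{prop:extr-part-prop}(i) extremality of a partition depends only on $\Phi(1)$, so I may replace $\Phi$ by $\bigoplus_{j=1}^{n}\phi_j$, where $\phi_j(t):=\exp(2\pi\sqrt{-1}\lambda_j t)$, $t\in[0,1]$, is the short rotation path. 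By Proposition~\ref{prop:extr-part-prop}(iii), a partition is extremal for $\Phi$ if and only if it is extremal for each $\phi_j$ separately.

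Next is the standard computation $\mu(\phi_j^k)=2\lfloor k\lambda_j\rfloor+1$ whenever $k\lambda_j\notin\Z$ (which holds for all $k\in\N$ by strong non-degeneracy). Substituting, extremality of $k_1+\ldots+k_r=k$ for $\phi_j$ becomes $\sum_i\lfloor k_i\lambda_j\rfloor=\lfloor k\lambda_j\rfloor$, and since $k=\sum_i k_i$ this is in turn equivalent to
\[
\sum_{i=1}^{r}\{k_i\lambda_j\}\;<\;1,
\]
where $\{\cdot\}$ denotes the fractional part (each $\{k_i\lambda_j\}$ lies in $(0,1)$). Thus $k_1+\ldots+k_r=k$ is extremal for $\Phi$ precisely when this strict inequality holds for every $j=1,\ldots,n$.

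Finally I would match this with Condition~\ref{cond:A}. Set $\vtheta:=(e^{2\pi\sqrt{-1}\lambda_1},\ldots,e^{2\pi\sqrt{-1}\lambda_n})$, so that $\Gamma$ is the closure of $\{k\vtheta\mid k\in\N\}$ in $\T^n$. For the forward direction, given an extremal partition the points $\vtheta_i:=k_i\vtheta\in\Gamma$ with $\lambda_{ij}:=\{k_i\lambda_j\}\in(0,1)$ satisfy Condition~\ref{cond:A} by the inequality above. For the reverse direction, I note that the locus in $\T^{nr}$ cut out by Condition~\ref{cond:A} is open---it is defined by strict inequalities on fractional-part coordinates away from the subtori $\theta_{ij}=1$---while $(\{k\vtheta\mid k\in\N\})^r$ is dense in $\Gamma^r$; hence any $(\vtheta_1,\ldots,\vtheta_r)\in\Gamma^r$ satisfying Condition~\ref{cond:A} can be approximated by some $(k_1\vtheta,\ldots,k_r\vtheta)$ still in that open set, producing an extremal partition. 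The main point requiring care is the Step~1 reduction to a direct sum of short rotations; once this is justified via Proposition~\ref{prop:extr-part-prop}(i), (iii) and the constancy of integer-valued continuous functions on the non-degenerate stratum, Steps~2 and~3 are routine.
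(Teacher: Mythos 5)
Your proof is correct and follows essentially the same route as the paper's: isospectrally semi-simplify $\Phi$, translate extremality of $k_1+\ldots+k_r=k$ into the inequality $\sum_i\{k_i\lambda_j\}<1$ for every $j$, and pass between points of $\Gamma$ and the iterates $\Phi^{k_i}(1)$ using density (your openness-plus-density step is exactly the paper's approximation step). The only difference is in how the key inequality is obtained: you split into planar rotations via Proposition \ref{prop:extr-part-prop}(iii) and use the explicit formula $\mu\big(\phi_j^{k}\big)=2\lfloor k\lambda_j\rfloor+1$, whereas the paper works with the short paths $\Psi_i$ in $\tSp(2n)$ directly and reads off $\mu(\Psi)=n$ from the intersections with the Maslov cycle; the two computations are interchangeable.
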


We emphasize that both Condition \ref{cond:A} and the existence of
extremal partitions are in fact properties of $\Phi(1)\in\Sp(2n)$; see
Proposition \ref{prop:extr-part-prop} (i).

\begin{Example}[Toric $\Phi$ revisited]
  \label{ex:toric}
  Assume that $\dim\Gamma=n$, i.e., $\Gamma=\T^n$. Then $\Phi$ admits
  extremal partions of arbitrarily large length; cf.\ Example
  \ref{ex:base_grp2}.
\end{Example}

\begin{proof}[Proof of Proposition \ref{prop:cond-A}]
  The sequence of iterated indices $\mu(\Phi^k)$, $k\in\N$, does not
  change under an isospectral deformation of $\Phi$. Hence we can
  require $\Phi(1)$ to be semi-simple and view it as a topological
  generator of $\Gamma$.

  Assume first that Condition \ref{cond:A} is satisfied: there exist
  $r$ points $\vtheta_1,\ldots,\vtheta_r$ in $\Gamma$ such that
  \eqref{eq:cond_A} holds:
 $$
  \sum_{i=1}^r \lambda_{ij}<1\textrm{ for all $j=1,\ldots, n$},
 $$
 where
 $ \vtheta_i=\big(e^{2\pi\sqrt{-1}\lambda_{i1}},\ldots,
 e^{2\pi\sqrt{-1}\lambda_{in}}\big)$ with $0<\lambda_{ij}<1$. For
 $i+1,\ldots, r$, set
 \begin{equation}
  \label{eq:short_paths}
  \Psi_i(t)=\big(e^{2\pi\sqrt{-1}\lambda_{i1}t},\ldots,
  e^{2\pi\sqrt{-1}\lambda_{in}t}\big), \quad t\in [0,\,1].
 \end{equation}
 Then $\mu(\Psi_i)=n$ and also $\mu(\Psi_1\ldots\Psi_r)=n$ by
 Condition \ref{cond:A}. Therefore,
 $$
 \sum\mu(\Psi_i)-\mu(\Psi_1\ldots\Psi_r)=(r-1)n.
 $$
 The end-points $\vtheta_i=\Psi_1(1)$ can be approximated arbitrarily
 well by the iterates $\Phi^{k_i}(1)$ for some $k_i\in\N$. In other
 words, for a loop $\phi_i$, the element $\phi_i\Phi^{k_i}$ can made
 arbitrarily close to $\Psi_i$ and the product of $\phi_i\Phi^{k_i}$
 can be made arbitrarily close to $\Psi_1\ldots\Psi_r$. Note that that
 this product has the form $\phi\Phi^{k}$, where $k=k_1+\ldots+k_r$
 and $\phi$ is the product of the loops $\phi_i$. (The group
 $\pi_1\big(\Sp(2n)\big)$ is in the center of $\tSp(2n)$.) Recalling
 that the defect depends only on the end-points, we see that
 $$
 \sum\mu\big(\Phi^{k_i}\big)-\mu\big(\Phi^k\big) =
 \sum\mu(\Psi_i)-\mu(\Psi_1\ldots\Psi_r)=(r-1)n.
 $$

 Conversely, assume that $k_1+\ldots+k_r=k$ is an extremal partition
 for $\Phi$. Set $\vtheta_i=\Phi^{k_i}(1)$. It suffices to show that
 \eqref{eq:cond_A} holds, where $\lambda_{ij}$ are as above. Define
 the paths $\Psi_i$ by \eqref{eq:short_paths} and set
 $\Psi=\Psi_1\ldots\Psi_r$. (By slightly perturbing $\Phi$ we can
 ensure that $\Psi$ is non-degenerate.) Then $\mu(\Psi_i)=n$ and
 $$
 \sum\mu(\Psi_i)-\mu(\Psi)=
 \sum\mu\big(\Phi^{k_i}\big)-\mu\big(\Phi^k\big) =(r-1)n
 $$
 by \eqref{eq:adm_part}. Thus $\mu(\Psi)=n$.

 Clearly,
 $$
 \Psi(t)=\big(e^{2\pi\sqrt{-1}\lambda_{1}t},\ldots,
 e^{2\pi\sqrt{-1}\lambda_{n}t}\big), \quad t\in [0,\,1],
 $$
 where
 $$
\lambda_j= \sum_{i=1}^r \lambda_{ij}>0.
 $$
 All intersection points of this path with the Maslov cycle are
 positive. The condition that $\lambda_j<1$ for all $j$ is equivalent
 to that the only intersection of $\Psi$ with the Maslov cycle is at
 $t=0$ which, in turn, is equivalent to that $\mu(\Psi)=n$.
\end{proof}

Condition \ref{cond:A} is hard to visualize and verify directly and
this is where the following criterion comes handy. Denote by $\Pi_r$
the open cube $(0,\, 1/r)^n$ in the torus $\T^n$ identified with the
quotient of the cube $[0,\,1]^n$.

\begin{Proposition}
  \label{prop:cond-A2}
  Assume that $\codim \Gamma\leq 1$. Then Condition \ref{cond:A} is
  satisfied for $\Gamma$ if and only if
  $\Gamma\cap \Pi_r\neq \emptyset$.
\end{Proposition}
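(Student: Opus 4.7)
The plan is to prove both implications, with the reverse direction requiring a case analysis based on the structure of $\Gamma$ under the hypothesis $\codim \Gamma \leq 1$. The forward direction is immediate: given $\vtheta \in \Gamma \cap \Pi_r$ with representative coordinates $\lambda_j \in (0, 1/r)$, setting $\vtheta_i := \vtheta$ for all $i = 1, \ldots, r$ yields $\sum_{i=1}^r \lambda_{ij} = r \lambda_j < 1$ for every $j$, which is precisely Condition \ref{cond:A}.

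For the reverse direction, when $\Gamma = \T^n$ we have $\Gamma \cap \Pi_r = \Pi_r \neq \emptyset$, so I focus on $\codim \Gamma = 1$. Here the connected component $\Gamma_0$ is a codimension-one subtorus, hence the kernel of a surjective homomorphism $T \colon \T^n \to \T^1$ of the form $T(\vtheta) = \sum_j a_j \theta_j \bmod 1$ with integers $a_j$ satisfying $\gcd(a_j) = 1$. Since $\Gamma/\Gamma_0$ is a finite subgroup of $\T^n/\Gamma_0 \cong \T^1$, it is cyclic of some order $m$, and $\Gamma = T^{-1}\bigl((1/m)\Z/\Z\bigr)$. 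I would then split on the signs of the coefficients $a_j$.

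If the $a_j$ take both positive and negative values, the hyperplane $\sum_j a_j x_j = 0$ meets the open positive orthant in $\R^n$; choosing $\vec\epsilon$ in this intersection, the ray $\{t\vec\epsilon : t > 0\}$ lies in $\Gamma_0$ and enters $\Pi_r$ for sufficiently small $t$, so $\Gamma_0 \cap \Pi_r \neq \emptyset$ without even invoking Condition \ref{cond:A}. The substantive case is when all $a_j$ share a sign; after replacing the defining form by its negative if needed, I may take $a_j \geq 0$ with $A := \sum_j a_j > 0$. For each $\vtheta_i$ supplied by Condition \ref{cond:A}, the real number $\sum_j a_j \lambda_{ij}$ lies in $(1/m)\Z$, and strict positivity of the $\lambda_{ij}$ combined with $a_j \geq 0$ (not all zero) forces it to be a positive multiple of $1/m$, hence at least $1/m$. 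Summing over $i$ and using $\sum_i \lambda_{ij} < 1$ gives
\[
\frac{r}{m} \leq \sum_j a_j \sum_i \lambda_{ij} < \sum_j a_j = A,
\]
so $r < mA$. The diagonal point $\vec\theta := \bigl(1/(mA), \ldots, 1/(mA)\bigr)$ then has each coordinate equal to $1/(mA) < 1/r$ and $\sum_j a_j \theta_j = 1/m$, exhibiting an element of $\Gamma \cap \Pi_r$.

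I expect the main obstacle to be this same-sign sub-case: converting the combinatorial Condition \ref{cond:A} into the quantitative inequality $r < mA$ and then producing an explicit witness in $\Gamma \cap \Pi_r$. The full-torus case, the mixed-sign sub-case, and the structural identification of $\Gamma$ as $T^{-1}((1/m)\Z/\Z)$ are routine torus-arithmetic manipulations.
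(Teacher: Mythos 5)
Your proof is correct, and your reverse direction follows a genuinely different route from the paper's. The paper works in the fundamental cube $C=[0,1]^n$: it shrinks each $\vtheta_i$ radially toward the origin until it first meets the lift $L$ of $\Gamma$, so that all the rescaled points land on one convex component $L_0$ of $L$ (this is where $\codim\Gamma\le 1$ enters), and then takes their barycenter, whose $j$-th coordinate is at most $\frac1r\sum_i\lambda_{ij}<1/r$. You instead present $\Gamma$ as $T^{-1}\big(\tfrac1m\Z/\Z\big)$ for a primitive character $T=\sum_j a_j\theta_j$, convert Condition \ref{cond:A} in the same-sign case into the numerical inequality $r<mA$, and exhibit the explicit diagonal witness $\big(1/(mA),\ldots,1/(mA)\big)$, which lies on the translate $\sum_j a_jx_j=1/m$ of the lift of $\Gamma_0$ nearest the origin --- essentially the paper's $L_0$ in the substantive case. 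What your version buys is an explicit point together with the sharp threshold $r<mA$ for Condition \ref{cond:A} when all $a_j$ have the same sign, quantitative information very much in the spirit of Remark \ref{rmk:Cond-A-2D}; your sign dichotomy also disposes cleanly of the corner case in which the component of $L$ through the origin meets the cube only along its boundary and hence misses $\Pi_r$, a point the paper's write-up passes over as obvious. What the paper's averaging argument buys is coordinate-freeness (no choice of a defining character for $\Gamma$) and a shape of argument one might hope to push beyond $\codim\Gamma\le 1$.
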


In dimension four, this gives a general necessary and sufficient
condition for Condition \ref{cond:A} to be satisfied:

\begin{Corollary}
  \label{cor:cond-A2}
  Assume that $n=2$ and $\dim \Gamma\geq 1$. Then Condition
  \ref{cond:A} is satisfied if and only if
  $\Gamma\cap \Pi_r\neq \emptyset$.
\end{Corollary}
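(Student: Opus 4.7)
The plan is to deduce this corollary directly from Proposition \ref{prop:cond-A2}, which already gives the equivalence between Condition \ref{cond:A} and the intersection property $\Gamma \cap \Pi_r \neq \emptyset$ under the assumption $\codim \Gamma \leq 1$. The only thing I need to verify is that in the present setting the codimension hypothesis is automatic.

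Concretely, since $n = 2$ the ambient torus is $\T^n = \T^2$, which has dimension two. The subgroup $\Gamma \subseteq \T^2$ is a closed abelian Lie subgroup (it is the closure in $\T^2$ of the cyclic subgroup generated by the Krein--positive eigenvalue vector $\vtheta$), so its dimension as a Lie group is well defined and lies in $\{0, 1, 2\}$. The assumption $\dim \Gamma \geq 1$ therefore forces
$$
\codim \Gamma \;=\; 2 - \dim \Gamma \;\leq\; 1,
$$
which is exactly the hypothesis of Proposition \ref{prop:cond-A2}. Applying that proposition yields the desired equivalence between Condition \ref{cond:A} and $\Gamma \cap \Pi_r \neq \emptyset$.

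There is no genuine mathematical difficulty here; the corollary is simply observing that for $n=2$ the condition $\codim \Gamma \leq 1$ of the preceding proposition is the transparent dichotomy ``$\Gamma$ is either a circle or the full torus'', excluding only the finite case $\dim \Gamma = 0$ (which is already excluded from consideration by the non-degeneracy hypotheses elsewhere in the paper, since then some iterate of $P$ would be degenerate). The role of this statement in the paper is thus to provide a clean criterion in dimension four that can be checked geometrically by examining the intersection of the one-parameter subgroup $\Gamma_0$ (or the full torus) with the small cube $\Pi_r = (0, 1/r)^2 \subset \T^2$.
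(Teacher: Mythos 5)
Your argument is correct and is exactly the paper's (implicit) reasoning: the corollary is stated as an immediate consequence of Proposition \ref{prop:cond-A2}, and the only content is the observation that for $n=2$ the hypothesis $\dim\Gamma\geq 1$ forces $\codim\Gamma = 2-\dim\Gamma\leq 1$. Nothing further is needed.
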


Here of course the case of $\dim\Gamma =1$ is most interesting: when
$\dim\Gamma =2$, Condition \ref{cond:A} obviously holds; see Example
\ref{ex:toric}.

\begin{Remark}
  \label{rmk:Cond-A-2D}
  If $n=2$, for every $r$ we have $\Gamma\cap \Pi_r\neq \emptyset$ for
  all but a finite number of subgroups $\Gamma\subset \T^2$ of
  positive dimension. For instance, assume that $\Gamma$ is connected
  and $r=3$ -- this is the minimal value of $r$ needed in dimension
  four to detect the quantum product. Then
  $\Gamma\cap \Pi_3=\emptyset$ if and only if the slope
  $s=-1, \, -2, \, -1/2$.  This is no longer true when $n\geq 3$, but
  even then this requirement is met for a majority of subgroups.
\end{Remark}

\begin{proof}[Proof of Proposition \ref{prop:cond-A2}]
  In one direction the assertion is obvious and requires no additional
  conditions on $\Gamma$. Thus we need to show that
  $\Gamma\cap \Pi_r\neq \emptyset$ whenever Condition \ref{cond:A}
  holds.

  Throughout the proof we will view the cube $C=[0,\,1]^n\subset \R^n$
  as the fundamental domain for $\T^n$. Let $H$ be the inverse image
  of the connected component of the identity in $\R^n$ and $L_q$ stand
  for connected components of the inverse image $L$ of $\Gamma$ in $C$
  under the natural maps $C\to\T^n$. Clearly, each $L_q$ is the
  intersection of a hyperplane parallel to $H$ with $C$. Among these
  denote by $L_0$ be the component closest to $0$.

  If $0\in L_0$, we obviously have $L_0\cap \Pi_r\neq \emptyset$ for
  all $r$ and the proof is finished. Thus we can assume that
  $0\not\in L_0$, i.e., $L_0$ is a positive distance from $0$.
  
  Denote by $\vl_i\in C$ the inverse image of the point $\vtheta_i$
  from Condition \ref{cond:A}. Thus, in the notation for that
  condition,
    $$
    \vl_i=(\lambda_{i1},\ldots,\lambda_{in})
    $$
  and \eqref{eq:cond_A} holds for all $j=1,\ldots, n$, i.e.,
    $$
    \sum_{i=1}^r \lambda_{ij}<1.
    $$
  Note that the points $\vl_i$ may lie on different components $L_q$.

  Consider the segment $Y_i=\{t\vl_i\mid t\in [0,\,1]\}$ connecting
  $0$ and $\vl_i$. The intersection $\vl'_i\in Y_i\cap L$ that is
  closest to zero lies on $L_0$. Denote the components of $\vl'_i$ by
  $\l'_{ij}$. Then $\l'_{ij}=t\l_{ij}$ with $t\in (0,\,1]$, and hence
    $$
    \sum_{i=1}^r \lambda'_{ij}\leq \sum_{i=1}^r \lambda_{ij}<1.
    $$
  Let
    $$
    \vl'=\frac{1}{r}\sum_{i=1}^r\vl'_i.
    $$
    to be the mean or `the ``center of mass'' of the points $\vl'_i$.
    Then $\vl'\in L_0$, since all $\vl'_i\in L_0$ and $L_0$ is
    convex. As a consequence, the projection $\vtheta$ of $\vl'$ to
    $\T^n$ is in $\Gamma$. Furthermore, for every component $\l'_j$ of
    $\vl'$ we have
  $$
  \l'_j=\frac{1}{r}\sum_{i=1}^r\l'_{ij}\leq
  \frac{1}{r}\sum_{i=1}^r\l_{ij}<\frac{1}{r}.
  $$
  Therefore, $\vtheta'\in \Gamma\cap\Pi_r$.
\end{proof}

\begin{Remark}
  It is very unlikely that Proposition \ref{prop:cond-A2} would
  hold in other settings without significant constraints on $\Gamma$
  and $r$. However, some partial results are certainly feasible.  For
  instance, assuming that $\Gamma$ is connected, one could expect the
  proposition to hold, perhaps under some additional (un-)divisibility
  conditions on $\mu_\Gamma$ and~$r$.
 \end{Remark}

\subsection{Proofs of Theorems \ref{thm:adm_part},
  \ref{thm:adm_part-dense} and \ref{thm:adm_part-dim4}}
\label{sec:adm_part-pf}
In this section we establish the combinatorial results underlying the
main theorems of the paper.

\begin{proof}[Proof of Theorem \ref{thm:adm_part}] As in the proof of
  Proposition \ref{prop:cond-A}, we can require $\Phi$ to be
  semi-simple; thus $\Phi(1)\in\Gamma$. Throughout the proof we will
  assume that Condition \ref{cond:A} is satisfied. Thus, by
  Proposition \ref{prop:cond-A}, there exists an extremal partition
  $\ell_1+\ldots+\ell_r=\ell$ of length $r$. Our goal is to modify it
  if necessary, creating a new extremal partition $k_1+\ldots+k_r=k$
  such that \eqref{eq:adm_part-index} holds:
  $$
  \mu\big(\Phi^{k_i}\big)\not\equiv n\mod 2N \textrm{ for all
    $i=1,\ldots, r$.}
  $$

  Assume first that Condition \ref{cond:B1} is satisfied, i.e., there
  exists a loop $\gamma$ in $\Gamma$ such that
  $N\not|\, \mu_\Gamma(\gamma)=\hmu(\gamma)/2$. Here we view $\gamma$
  as a loop in $\Sp(2n)$ and, in particular, an element of $\tSp(2n)$.

  We claim that $\gamma$ can be approximated arbitrarily well by the
  elements of the form $\phi\Phi^m$, where $\phi$ is a loop in
  $\Sp(2n)$ with $2N\mid\hmu(\phi)$.

  To prove this, observe that we can take a one-dimensional subgroup
  in $\Gamma$ as $\gamma$. Then there exists an element $\tpsi$ in the
  inverse image of $\Gamma$ in $\tSp(2n)$ such that
  $\tpsi^{2N}=\gamma$. Let $\psi$ be its image in $\Gamma$. Since
  $\Phi(1)$ generates $\Gamma$, we can approximate $\psi$ by the
  powers $\Phi^s(1)$ arbitrarily well. As a consequence, we can
  approximate $\tpsi$ arbitrarily well in $\tSp(2n)$ by the elements
  of the form $\phi_0\Phi^s$ where $\phi_0$ is a loop. Hence, the
  elements $\phi_0^{2N}\Phi^{2Ns}$ approximate $\gamma$. Then
  $\hmu\big(\phi_0^{2N}\big)=2N\hmu(\phi_0)$ and it remains to set
  $\phi=\phi_0^{2N}$ and $m=2Ns$.

  With the claim established, we are ready to modify the partition
  $\ell_1+\ldots+\ell_r=\ell$. If
  $\mu\big(\Phi^{\ell_i}\big) \not\equiv n\mod 2N$ we simply set
  $k_i=\ell_i$. If $\mu\big(\Phi^{\ell_i}\big) \equiv n\mod 2N$ we
  replace $\ell_i$ by $k_i=\ell_i+m$. Then
  $$
  \mu\big(\Phi^{k_i}\big)=\mu\big(\Phi^{\ell_i}\Phi^m\big).
  $$
  Making the approximation accurate enough, we have
  $$
  \mu\big(\Phi^{\ell_i}\Phi^m\big)=
  \mu\big(\Phi^{\ell_i}\phi^{-1}\gamma\big)=
  \mu\big(\Phi^{\ell_i}\big)+\hmu\big(\phi^{-1}\big)+\hmu(\gamma).
  $$
  Here the first term is congruent to $n$ modulo $2n$, the second term
  is divisible by $2N$ and the last term is not divisible by
  $2N$. Thus $\mu\big(\Phi^{k_i}\big) \not\equiv n\mod 2N$.

  It remains to show that the new partition is still extremal. The
  modification results in replacing $\Phi^{\ell_i}$ by $\Phi^{k_i}$
  which is approximately a product of $\Phi^{\ell_i}$ with a loop and
  likewise $\Phi^k$ is approximately the product of $\Phi^\ell$ with a
  loop. It is clear that if the approximations are accurate enough,
  depending only on $\Phi^{\ell_i}(1)$ and their products, the
  partition will remain extremal.

  Next assume that Condition \ref{cond:B2} is satisfied: $\Gamma$ is
  connected and there exists a convex neighborhood $V$ of $0\in \T^n$
  whose intersection with $\Gamma$ is connected and an iterate
  $\Phi^k(1)\in V$ such that $2N\not|\,\loopp\big(\Phi^k\big)$.
  In addition, we can also require that $N\mid\mu_\Gamma$,
  i.e., Condition \ref{cond:B1} fails.

  Since $\Gamma$ is connected and Condition \ref{cond:A} is a feature
  of $\Gamma$, replacing the original $\Phi$ by $\Phi^k$, we can
  assume that $\ell_1+\ldots+\ell_r=\ell$ is an extremal partition for
  $\Phi$ where $\Phi(1)\in V\cap\Gamma$ and $2N\not|\,\loopp(\Phi)$
  and $2N\mid \ell_i$ for all $i$.
  This is the partition we will change to a new extremal partition
  $k_1+\ldots+k_r=k$ such that \eqref{eq:adm_part-index} holds.

  As in the first part of the proof, we set $k_i=\ell_i$, i.e., no
  modification is needed, if
  $\mu\big(\Phi^{\ell_i}\big) \not\equiv n\mod 2N$. In the rest of the
  argument we describe how to change $\ell_i$ when
  $\mu\big(\Phi^{\ell_i}\big) \equiv n\mod 2N$.

  Since $\Gamma$ is connected, for every $i=1,\ldots, r$, any
  arithmetic progression contains an infinite subsequence
  $k_{ij}\to\infty$ such that $\Phi^{k_{ij}}(1)\to\Phi^{\ell_i}(1)$ as
  $j\to\infty$. Thus, setting $k_i=k_{ij}$, we will assume in what
  follows that $\Phi^{k_i}(1)$ is sufficiently close to
  $\Phi^{\ell_i}(1)$.

  We claim that
  $$
  \mu\big(\Phi^{k_i}\big)=k_i\loopp(\Phi)+d_i, \textrm{ where }
  d_i\equiv \mu\big(\Phi^{\ell_i}\big)\mod 2N.
  $$
  In particular, the residue of $d_i$ in $\Z_{2N}$ is independent of
  $k_i$.
  
  Indeed, let us write $\Phi\in\tSp(2n)$ as the product $\phi\xi$,
  where $\phi$ is a loop and $\xi$ is a short path; see Section
  \ref{sec:loop}. Then $\loopp(\Phi)=\hmu(\phi)$ and
  $$
  \mu\big(\Phi^{k_i}\big)=k_i\hmu(\phi)+\mu\big(\xi^{k_i}\big).
  $$
  Let $\zeta$ be a geodesic in $\Gamma$ connecting the origin to
  $\Phi^{k_i}(1)$. We have
  $$
  \mu\big(\Phi^{k_i}\big)=k_i\hmu(\phi)+d_i, \textrm{ where }
  d_i=\mu(\zeta)+\hmu\big(\xi^{k_i}\zeta^{-1}\big).
  $$
  Here $\xi^{k_i}\zeta^{-1}$ is a loop in $\Gamma$, and hence
  $2N\mid \hmu\big(\xi^{k_i}\zeta^{-1}\big)$ since Condition
  \ref{cond:B1} is assumed to fail. Let $\tzeta$ be the geodesic close
  to $\zeta$, connecting the origin to $\Phi^{\ell_i}$. Such a
  geodesic exists once $\Phi^{k_i}(1)$ is close to $\Phi^{\ell_i}(1)$,
  and $\mu(\tzeta)=\mu(\zeta)$.  Then, it is not hard to see that
  $2N \mid \hmu \big(\Phi^{\ell_i}\tzeta^{-1} \big)$ from the
  condition that $2N\mid\ell_i$, and therefore
  $d_i\equiv \mu\big(\Phi^{\ell_i}\big)  \mod 2N$.

  Now we are in a position to modify the partition
  $\ell_1+\ldots+\ell_r=\ell$.  Namely, when
  $\mu\big(\Phi^{\ell_i}\big) \equiv n\mod 2N$, we take
  $k_i\in 2N\N+1$ such that $\Phi^{k_i}(1)$ is sufficiently close to
  $\Phi^{\ell_i}(1)$. Then $d_i\equiv n\mod 2N$ and
  $$
  \mu\big(\Phi^{k_i}\big)=k_i\loopp(\Phi)+d_i\equiv
  \loopp(\Phi)+n\not\equiv n\mod 2N.
  $$
  To show that $k_1+\ldots+k_r=:k$ is again an extremal partition one
  argues exactly as in the first part of the proof.
\end{proof}

\begin{proof}[Proof of Theorem \ref{thm:adm_part-dense}] Since
  $\Gamma=\T^n$, all eigenvalues of $\Phi(1)$ are necessarily distinct
  and in particular $\Phi(1)$ is semi-simple. Furthermore, the orbit
  $\Phi(1)^k$, $k\in\N$ is dense on $\T^n$. Hence for a suitable
  iterate $\Phi^\ell$, the end point $\Phi^\ell(1)$ is the sum of
  arbitrarily small rotations $\exp\big(\pi\sqrt{-1}\lambda_i\big)$,
  where $0<\lambda_i\ll|\lambda_n|$ for $i=1,\ldots, n-1$ and
  $\lambda_n<0$. Iterating again to bring
  $\exp\big(\pi\sqrt{-1}\lambda_nk\big)$ close to $1\in S^1$ while
  other components stay small due to the inequality between the
  eigenvalues, we can ensure that $\Phi^{m}(1)$ is a sum of
  arbitrarily small rotations, which we still denote by
  $\exp\big(\pi\sqrt{-1}\lambda_i\big)$ with all $\lambda_i>0$, and
  such that $\loopp(\Phi^m)=-2+d$ with $2N\mid d$.  We have
  $$
  \mu\big(\Phi^{m}\big)=n-2+d\equiv n-2\mod 2N.
  $$
  and, as long as $r\max|\lambda_i|<2$,
  $$
  \mu\big(\Phi^{rm}\big)=n+ r(d-2).
  $$
  Furthermore, $m+\ldots+m=rm=:k$ is an extremal iterations, for
  $$
  r \mu\big(\Phi^{m}\big)-(r-1)n=n+ r(d-2)=\mu\big(\Phi^{rm}\big).
  $$
\end{proof}

\begin{proof}[Proof of Theorem \ref{thm:adm_part-dim4}]
  As in the proof of Proposition \ref{prop:cond-A2}, we view
  $C=[0,1]^2 \subset \R^2$ as the fundamental domain for $\T^2$ and
  let $L_q$ stand for connected components of the inverse image $L$ of
  $\Gamma$ in $C$ under the map $C \rightarrow \T^2$. We first
  investigate when the Condition \ref{cond:A} fails. Since $L$ always
  intersects $\Pi_3=(0,1/3)^2 \subset C$ when the slope $s$ is
  positive, we assume that $s <0$ and write $s = - s_1 /s_2$ where
  $s_1, s_2 \in \N$ are relatively prime. If $\Gamma$ is connected,
  $L$ divides parallel boundary components of $C$ into $s_i$ equal
  segments (and more if $\Gamma$ is not connected). As a consequence,
  if $s_1 \geq 3$ or $s_2 \geq 3$ there exist $L_q \subset L$ such
  that $L_q \cap \Pi_3 \neq \emptyset$. Note that if $\Gamma$ is
  connected and $s= -1,\, -1/2,\, -2$, then $L \cap \Pi_3 = \emptyset$
  and Condition \ref{cond:A} fails by Corollary \ref{cor:cond-A2}
  (cf.\ Remark \ref{rmk:Cond-A-2D}). In other words, Condition
  \ref{cond:A} is satisfied for a connected group $\Gamma$ if and only
  if $s$ is not one of these values.

  In the remaining part of the proof we assume that Condition
  \ref{cond:A} is satisfied, i.e., $s \neq -1,\, -1/2,\, -2$, but
  Condition \ref{cond:B1} fails and Condition \ref{cond:B2} fails for
  the connected component of identity in $\Gamma$; cf. Remark
  \ref{rmk:connected}. In particular, for every loop $\gamma$ in
  $\Gamma$ we have $2N\mid\hmu(\gamma)$. Let $V$ be a convex
  neighborhood of $0 \in \T^2$ such that $V \cap \Gamma$ is connected
  and let $k \in \N$ be such that $\Phi^k(1) \in V\cap \Gamma$. By
  assumption, $2N \, \vert \, \loopp\big(\Phi^k\big)$. We replace
  $\Phi$ by $\Phi^k$ and $\Gamma(\Phi)$ by $\Gamma\big(\Phi^k\big)$,
  while keeping the notation $\Phi$ for the iterated map.

  Below we will show that for any $k\in\N$ the index
  $\mu\big(\Phi^k\big)$ only depends (modulo $2N$) on the connected
  component $L_q \subset L$ where the end point $\Phi^k(1)$ is, and,
  furthermore, for any consecutive connected components $L_{q_1}$,
  $L_{q_2}$ of $L$, the index is different modulo $2N$. Here $L_{q_1}$
  is consecutive to $L_{q_2}$ if $L_{q_1} \neq L_{q_2}$ and there is
  no other connected component of $L$ which is strictly closer to
  $L_{q_1}$ in $C$ than $L_{q_2}$.

  Indeed, let $L_q \subset L$ and $k_1, k_2 \in \N$ be such that
  $\Phi^{k_1}(1)$ and $\Phi^{k_2}(1)$ are in $L_q$. Since
  $2N \, \vert \, \loopp(\Phi)$, the difference
  $$
   d:= \mu\big(\Phi^{k_1}\big)-\mu\big(\Phi^{k_2}\big) \mod 2N
  $$
  is equal (modulo $2N$) to the mean index $\hmu(\gamma)$ of a loop
  $\gamma \subset \Gamma$. Since Condition \ref{cond:B1} fails, $2N$
  divides $ \hmu(\gamma)$ and hence $d \equiv 0 \mod 2N$. To prove the
  second assertion, suppose that $\Phi^{k_1}(1)$ and $\Phi^{k_2}(1)$
  are on two consecutive components $L_{q_1}$ and $L_{q_2}$. Let
  $\gamma$ be a path in $\Gamma$ connecting $\Phi^{k_2}(1)$ and
  $\Phi^{k_1}(1)$, and let $\eta$ be the shortest path in $C$ from
  $\Phi^{k_1}(1)$ to $\Phi^{k_2}(1)$. We have
  $$
  d \equiv \hmu(\gamma \sharp \eta) \mod 2N,
  $$
  where $\gamma \sharp \eta \subset \T^2$ is the loop obtained by
  concatenating $\gamma$ and $\eta$. If $d \equiv 0 \mod 2N$, by
  shifting the loop $\gamma \sharp \eta$, we see that the index is
  constant (modulo $2N$) as a function of $L_q$, which if $L$ is
  connected, i.e., $s=\pm 1$.

  With these observations in mind, we are ready to prove the theorem.
  Assume first that $s>0$ and either $s_1>3$ or $s_2 >3$. Then
  $L\cap \Pi_3$ has at least two connected components, and for at
  least on one of them the index is different from $2 \mod
  2N$. Combining this with the assumption that Condition \ref{cond:B1}
  fails finishes the proof for positive slopes: For $s>0$ the
  assertion of the theorem holds if $N=2$ and $s \neq 1,\, 3,\, 1/3$
  or $N=3$ and $s \neq 2,\, 1/2$. (When $s=1$ and $N=3$, Condition
  \ref{cond:B2} is satisfied.)

  When $s<0$ we will give different arguments for $N=2$ and $N=3$. In
  both cases there will be no other slope to rule out other than
  $s=-1,\, -2,\, -1/2$ (for which Condition \ref{cond:A} fails).

  \subsubsection*{The case of $N=2$} Since we are assuming that the
  Condition \ref{cond:B1} fails and thus $s_1 - s_2$ is even, both
  $s_1, s_2$ are odd. The index on the connected components of $L$
  that contain $0 \in \T^2$ is equal to $0 \mod 4$. Then since $s_1$
  and $s_2$ are odd, the index is $0 \mod 4$ on the connected
  component that is closest to $0 \in C$. We conclude that for $s<0$
  and $N=2$ the assertion holds if $s\neq -1, \, -2, \, -1/2$.

  \subsubsection*{The case of $N=3$} If either $s_1 \geq 6$ or
  $s_2 \geq 6$, then $L\cap \Pi_3$ has two connected components and
  the proof is finished as in $s>0$ case. It remains to check the
  following slopes: $s=-1/4,\, -4,\, -2/5,\, -5/2$. (Here we are again
  using the assumption that the Condition \ref{cond:B1} fails.)  A
  direct computation shows that when $s$ is from the list above, one
  of the connected components of $L$ that contains $0 \in \T^2$
  intersects $\Pi_3$. Furthermore, on such a component, as in the
  $N=2$ case, the index is equal to $0 \mod 6$. We again conclude that
  for $s<0$ and $N=3$ the assertion holds if $s\neq -1,\, -2,\, -1/2$.
\end{proof}

\subsection{Proof of Proposition \ref{prop:CZ-qm}}
\label{sec:adm_part-extra} Consider two elements $\Phi$ and $\Psi$ of
$\tSp(2n)$. Our goal is to prove the upper bound
$$
|D|=\big|\mu(\Psi\Phi)-\mu(\Psi)-\mu(\Psi)\big|\leq n
$$ 
on the absolute value of the defect $D$, where $\Phi$ and $\Psi$ and
the product $\Psi\Phi$ (or rather the end-points $\Phi(1)$ and
$\Psi(1)$ and their product) are non-degenerate. In fact, we only need
to show that $D\leq n$ for the opposite inequality $D\geq -n$ follows
by replacing $\Phi$ by $\Phi^{-1}$ and $\Psi$ by $\Psi^{-1}$.

It is convenient to recast the question in terms of the
Robbin--Salamon index $\MURS$; see \cite{RS-index}. For any path
$\Phi\colon [0,\,1]\to \Sp(2n)$ denote by $\gr(\Phi)$ the path traced
by the graph of $\Phi(t)$ in the Lagrangian Grassmannian of the
twisted product
$$
\bar{\R}^{2n} \times \R^{2n} =\big(\R^{2n} \times \R^{2n}, -\omega_0
\times \omega_0\big),
$$
where $\omega_0$ is the standard symplectic structure on
$\R^{2n}$. For a non-degenerate element $\Phi\in\tSp(2n)$, we have
$$
\mu(\Phi)= \MURS \big(\gr(\Phi), \triangle\big),
$$
where $\triangle$ is the diagonal in $\bar{\R}^{2n} \times \R^{2n}$.
Furthermore,
$$
D=\MURS \big(\gr(\Psi(1)\Phi), \triangle\big) -\MURS\big(\gr(\Phi),
\triangle\big),
$$
since the Robbin--Salamon index is additive under concatenation of
paths. The index is invariant under linear symplectic maps. Hence,
$$
\MURS \big(\gr(\Phi), \triangle\big) = \MURS\big(\gr(\Psi(1)\Phi),
\gr(\Psi(1))\big)
$$
and
$$
D=\MURS \big( \underbrace{\gr(\Psi(1)\Phi)}_{\Lambda},
\underbrace{\triangle}_{L_1}\big) - \MURS
\big(\underbrace{\gr(\Psi(1)\Phi)}_{\Lambda},
\underbrace{\gr(\Psi(1))}_{L_2}\big).
$$
In other words, in the notation introduced by the underbraces, we see
that $D$ can be expressed as the difference
$$
s\big(L_1, L_2; \Lambda(0), \Lambda(1)\big):=\MURS(\Lambda, L_1)-
\MURS(\Lambda, L_2),
$$
which is independent of the path $\Lambda$ connecting $\Lambda(0)$ to
$\Lambda(1)$ and called the H\" ormander index; see \cite[Thm.\
3.5]{RS-index}). Below we utilize this path independence to upper
bound the defect $D$.

Choose a Lagrangian complement $N$ to $\triangle$ which is transverse
to $\gr\big(\Psi(1)\big)$ and $\gr\big(\Psi(1)\Phi(1)\big)$. Every
Lagrangian subspace transverse to $N$ can be written as the graph of a
symmetric matrix with respect to the splitting $\triangle \times
N$. Indeed, observe that the graph of a linear map
$S\colon \triangle \rightarrow N$ is Lagrangian if and only if
$$
\omega_0(u+Su, v+Sv)=\langle u, Sv \rangle - \langle Su, v \rangle =0
$$
for every $u$, $v \in \triangle$, where
$\langle \cdot\,, \cdot \rangle$ is the standard inner product. This
is equivalent to the condition that $S$ is symmetric.

Let $A \colon \triangle \rightarrow N$ and
$B \colon \triangle \rightarrow N$ be such that
$\gr(A) = \gr\big(\Psi(1)\big)$ and
$\gr(B)=\gr\big(\Psi(1)\Phi(1)\big)$. Then
\begin{align*}
  D & = s\big(\triangle, \gr(\Psi(1)); \gr(\Psi(1)),
      \gr\big(\Psi(1)\Phi(1)\big)\big)\\
    & = s\big(\gr(0),\gr(A); \gr(A), \gr(B)\big).
\end{align*}
Next, applying \cite[Thm.\ 3.5]{RS-index} (the first equality) and
\cite[Lemma 5.2]{RS-index} (the second equality) to the right hand
side, we see that
\begin{align*}
D &=\frac{1}{2}\big[\sgn(B)-\sgn(A)-\sgn(B-A)\big]\\
& = \frac{1}{2}\sgn\big(B^{-1}-A^{-1}\big)\\
&  \leq n,
\end{align*}
since $\sgn(S)/2\leq n$ for any symmetric matrix $S$. This completes
the proof of the proposition.\hfill$\qed$

\begin{Remark}[Defect of the Conley--Zehnder Type Quasimorphisms] 
  \label{rmk:Maslov-qm}
  As an immediate consequence of Proposition \ref{prop:CZ-qm}, one
  obtains upper bounds on the defect $D$ of several types of Maslov or
  Conley--Zehnder quasimorphisms. Namely, it readily follows from the
  proposition that $|D|\leq 4n$ for the mean index and the upper or
  lower semi-continuous extensions of the Conley--Zehnder index. The
  proof of the proposition yields the upper bound $|D|\leq 3n$ for the
  Robbin--Salamon index.
\end{Remark}

\end{document}